\newtheorem{thm}{Theorem}[section]
\newtheorem{lem}[thm]{Lemma}
\newtheorem{conj}[thm]{Conjecture}
\theoremstyle{definition}
\newtheorem{defn}[thm]{Definition}
\theoremstyle{remark}
\newcommand{\A}{\mathcal{A}}
\begin{document}

\title{Locating Patterns in the De Bruijn Torus}
\author{Victoria Horan \thanks{\texttt{victoria.horan.1@us.af.mil}} \\  Air Force Research Laboratory \\ Information Directorate \\ \\ Brett Stevens \thanks{\texttt{brett@math.carleton.ca}} \\ School of Mathematics and Statistics \\ Carleton University}%


\date{\today}%

\maketitle

\let\thefootnote\relax\footnote{Approved for public release; distribution unlimited:  88ABW-2015-2248.}

\begin{abstract}
    The de Bruijn torus (or grid) problem looks to find an $n$-by-$m$ binary matrix in which every possible $j$-by-$k$ submatrix appears exactly once.  The existence and construction of these binary matrices was determined in the 70's, with generalizations to $d$-ary matrices in the 80's and 90's.  However, these constructions lacked efficient decoding methods, leading to new constructions in the early 2000's.  The new constructions develop cross-shaped patterns (rather than rectangular), and rely on a concept known as a half de Bruijn sequence.  In this paper, we further advance this construction beyond cross-shape patterns.  Furthermore, we show results for universal cycle grids, based off of the one-dimensional universal cycles introduced by Chung, Diaconis, and Graham, in the 90's.  These grids have many applications such as robotic vision, location detection, and projective touch-screen displays.
\end{abstract}

\section{Introduction}

During the Workshop on Generalizations of de Bruijn Cycles and Gray Codes at the Banff International Research Station in December 2004, Ron Graham proposed Problem 480:  De Bruijn Tori \cite{ResearchProbs}.  In short, a de Bruijn torus is an $r \times v$ $d$-ary array embedded on a torus in which every possible $n \times m$ array appears exactly once.  These types of tori or grids are extremely useful in many applications, such as robotic vision \cite{MagicalMath} and projected touch screens \cite{TouchScreen}.

While much work has been done on the existence of these tori (see \cite{HI1995}, for example), current methods require more efficient decoding algorithms.  To cope with these difficulties, new constructions were developed that produced alternative window sizes and shapes instead of rectangular subarrays.  For example, in \cite{TEtzion} a construction with a cross-shaped window was developed with a decoding algorithm that allowed for a far more efficient location discovery method.  In this paper, we expand on this work and show a wider range of window options.  With our new construction, we reduce the brute-force complexity of $\mathcal{O}(d^{2n})$ for a $d$-ary grid with window size $n \times n$ down to $\mathcal{O}(d^n)$.  Coupled with recent work on the infamous problem of ranking de Bruijn sequences, by choosing appropriate sequences to base our grid off of, this complexity may be reduced even further down to $\mathcal{O}(n^3)$ (using results from \cite{cubic}) or $\mathcal{O}(n^2)$ (using results from \cite{quadratic}).

In Section \ref{background} we provide the necessary definitions and relevant background.  Section \ref{torus} develops new results on the de Bruijn torus problem, while Section \ref{ucycles} generalizes this problem from de Bruijn sequences to universal cycles.  Finally, Section \ref{FutureWork} explores possible future research directions in this area.

\section{Background and Definitions}\label{background}

For the unfamiliar reader, we provide the following definitions and brief history of this research problem.

\begin{defn}
    A $d$-ary \textbf{de Bruijn sequence} of order $n$ is a string $D=x_0x_1x_2 \ldots x_{d^n+n-2}$ such that every $n$-tuple over the alphabet $[d]= \{1,2,\ldots , d\}$ appears exactly once as $x_i x_{i+1} \ldots x_{i+n-1}$.
\end{defn}

De Bruijn sequences are often cyclic, meaning that the last letter is adjacent to the first, and $n$-tuples are allowed to `wrap around' from end to beginning.  In this case, the sequences are often called de Bruijn cycles.

\begin{defn}
     A \textbf{de Bruijn array} (or $(r,v;n,m)_d$-array) is an $r \times v$ $d$-ary array in which every window of size $n \times m$ appears exactly once.  A \textbf{de Bruijn torus} is a de Bruijn array in which the last row is adjacent to the first row, and similarly the last column is adjacent to the first column.
\end{defn}

Note that any de Bruijn torus can be easily converted to a de Bruijn array, but not necessarily vice versa.

\begin{defn}
    A \textbf{pseudo-random sequence} is a $d$-ary de Bruijn sequence of order $n$ that is missing the term $0^n$ and is developed from a linear feedback shift register based on a primitive polynomial of degree $n$.
\end{defn}

\begin{defn}
    \cite{PNSeq1988}  A \textbf{pseudo-random array} is an array in which every \textit{nonzero} window appears exactly once, and we will call it an $(r,v;n,m)_d$-PNarray.
\end{defn}

A generalization of de Bruijn sequences that allows for combinatorial objects other than simply $d$-ary words is a universal string, introduced in \cite{CDG}.

\begin{defn}
    A \textbf{universal string}, or ustring, over a set of combinatorial objects $\mathcal{C}$, each of order $n$, is a sequence $U = a_0 a_1 \ldots a_{m-1}$ in which each object is represented exactly once as a consecutive substring $a_{i+1}a_{i+2} \ldots a_n$.
\end{defn}

As with de Bruijn sequences, if a universal string is cyclic and wraps around (i.e. subscript addition is modulo $m$), we call it a \textbf{universal cycle} or \textbf{ucycle}.  With respect to our previous definitions, a de Bruijn cycle is simply a universal cycle with $\mathcal{C}$ equal to the set of all $d$-ary strings of length $n$.

Past work in this area concerns pseudo-random and de Bruijn arrays and tori with rectangular windows.  These results are summarized succinctly below.

\begin{thm}
    There exists an $(r,v;n,m)_d$-array whenever:
    \begin{itemize}
        \item  \emph{\cite{Etzion1988}}  $d=2,r=2^\ell,v=2^{nm-\ell}$, where either $n<2^\ell \leq 2^n$, or $m \neq 2$ if $\ell=n$.
        \item  \emph{\cite{HI1995}}  If $d$ has prime decomposition $d = \Pi p_{\ell}^{\alpha_\ell}$ and we define $q = d \Pi p_{\ell}^{\lfloor \log_{p_\ell} n \rfloor}$, then $r=q$ and $v = d^{nm/q}$.
    \end{itemize}
\end{thm}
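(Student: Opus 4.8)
\emph{Proof sketch.} The plan is to prove each clause by exhibiting an explicit construction, since the displayed dimensions are exactly those forced by counting: an $r \times v$ torus carries $rv$ windows of shape $n \times m$ while there are $d^{nm}$ such windows over $[d]$, so one needs $rv = d^{nm}$, and the two families in the statement are chosen to meet this, so only sufficiency requires work. Both constructions rely on the same two ingredients: \textbf{(i)} a ``folding'' that reads a one-dimensional de Bruijn-type sequence onto the torus along a carefully chosen path, which disposes of base cases, and \textbf{(ii)} a recursive ``composition'' that builds larger tori from smaller ones by grafting on a new layer of rows or columns whose entries are dictated by an auxiliary de Bruijn sequence. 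In every such step the real content is a uniqueness lemma: the enlarged windows must remain pairwise distinct, equivalently one must be able to read off from a window the data of its smaller constituents.

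For the binary clause \cite{Etzion1988} I would begin with the base cases $n=1$ --- a $(1,2^m;1,m)_2$-array is literally a binary de Bruijn sequence of order $m$ written in a single row, and its transpose handles $m=1$ --- together with a short list of small tori built by hand. The recursion consists of two doubling operations: from a $(2^\ell, 2^{nm-\ell}; n, m)_2$-array it produces a $(2^{\ell+1}, 2^{nm-\ell}; n+1, m)_2$-array by adjoining a row-layer, and a $(2^\ell, 2^{n(m+1)-\ell}; n, m+1)_2$-array by adjoining a column-layer; in each case the new layer is interleaved according to a de Bruijn sequence chosen so that the layer encodes precisely the coordinate information that an $(n+1)\times m$, resp.\ $n\times(m+1)$, window must newly distinguish. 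Chaining these moves from the base cases realizes exactly the interval $n<2^\ell\le 2^n$; the extreme case $2^\ell=2^n$ requires a separate direct folding, and the excluded pair $\ell=n$, $m=2$ is a genuine non-existence (already $(2,2;1,2)_2$ fails, because the two horizontal windows in either row of a height-two torus share the same multiset of entries and hence cannot jointly produce $00$ and $11$), which is why the hypothesis carves it out.

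For the general clause \cite{HI1995} write $d=\prod_\ell p_\ell^{\alpha_\ell}$. I would first build, for each prime-power factor $p^{\alpha}$ of $d$, a torus of height $p^{\alpha}\cdot p^{\lfloor \log_p n\rfloor}$ --- the smallest height that can possibly host a window of height $n$ over a $p$-ary alphabet, which is exactly why $q$ has the stated form --- using either a $p$-ary analogue of the doubling recursion above or shift-register sequences over $\mathbb{Z}/p^{\alpha}\mathbb{Z}$ attached to appropriate primitive polynomials. Then I would amalgamate these per-prime tori on the product torus: the Chinese Remainder Theorem identifies the coordinate group $\mathbb{Z}_q\times\mathbb{Z}_v$ with the product of the per-prime coordinate groups, and a $d$-ary symbol with the tuple of its residues modulo the $p_\ell^{\alpha_\ell}$, so a $d$-ary $n\times m$ window decomposes into independent per-prime windows; distinctness in every factor then forces distinctness of the whole, and the result is an array of the stated dimensions $q\times v$.

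\textbf{The main obstacle} is, throughout, the same well-posedness claim --- that an enlarged or amalgamated window determines uniquely the pair (smaller window, adjoined data) --- and this ``decodability'' argument, which is where the inequalities on $\ell$ and the minimality of $q$ are used, is the technical heart of both \cite{Etzion1988} and \cite{HI1995}. What remains once it is in place is the bookkeeping of boundary situations --- the extreme line $\ell=n$, the exclusion $m\ne 2$, and the case in which $n$ is itself a power of some $p_\ell$ so that $\lfloor\log_{p_\ell} n\rfloor$ is exact --- each of which I would dispatch with its own small direct argument.
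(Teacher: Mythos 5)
The first thing to note is that the paper does not prove this statement at all: it appears in the background section as a summary of known results, attributed to \cite{Etzion1988} and \cite{HI1995} and stated without proof, so there is no argument of the paper's to compare yours against. Judged on its own terms, your sketch correctly identifies the necessary counting condition $rv=d^{nm}$, the general shape of the constructions (fold a one-dimensional sequence for base cases, then compose recursively, with a uniqueness/decodability lemma carrying all the weight), and a plausible Chinese-Remainder reduction of the general $d$-ary case to prime-power alphabets whose tori have pairwise coprime dimensions. Your non-existence argument for $(2,2;1,2)_2$ is also correct: a cyclic row of length $2$ can only contain the window $00$ by being the row $00$, which then produces $00$ twice.

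However, one of your two recursive moves is dimensionally impossible as stated. Passing from a $(2^{\ell},2^{nm-\ell};n,m)_2$-array to a $(2^{\ell+1},2^{nm-\ell};n+1,m)_2$-array yields an array of area $2^{nm+1}$, whereas a torus with $(n+1)\times m$ windows must have area $2^{(n+1)m}=2^{nm+m}$; these agree only when $m=1$, so your own opening counting argument rules this step out. (The column move is consistent, since multiplying the width by $2^{n}$ matches the required growth.) Any row-adjoining operation must enlarge the array by a factor of $2^{m}$, not $2$, and designing that enlargement so that the enlarged windows remain pairwise distinct is precisely the technical content of \cite{Etzion1988}; it cannot be dispatched by saying the new layer is ``interleaved according to a de Bruijn sequence.'' More broadly, both decodability lemmas --- which you yourself identify as the heart of the matter --- are named but never proved, so the proposal is a plan for a proof rather than a proof. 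That is a defensible stance for a theorem that compresses two substantial papers into two bullet points, but the dimensional inconsistency above must be repaired before the plan is even internally coherent.
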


\begin{thm}
    There exists an $(r,v;n,m)_d$-PNarray whenever:
    \begin{itemize}
        \item  \emph{\cite{PNSeq1988}}  $d=2,r=2^n-1, v=2^{nm-1}/r$, and gcd$(r,v)=1$.
    \end{itemize}
\end{thm}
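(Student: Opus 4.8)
The plan is to construct the array explicitly inside the field $\mathbb{F}=\mathbb{F}_{2^{nm}}$ and then verify the window property by a dimension count. Fix a primitive element $\alpha$ of $\mathbb{F}$, put $N=2^{nm}-1$, and (using $r=2^{n}-1$, $v=N/r$) set $\beta=\alpha^{v}$ and $\gamma=\alpha^{r}$; since $r$ and $v$ both divide $N$ with $rv=N$, the element $\beta$ has multiplicative order $r$ and $\gamma$ has order $v$. Writing $T=\mathrm{Tr}_{\mathbb{F}/\mathbb{F}_{2}}$ for the absolute trace, I would take the doubly periodic binary array
\[
A[i,j]\;=\;T\!\left(\beta^{i}\gamma^{j}\right),\qquad i\in\mathbb{Z}_{r},\ j\in\mathbb{Z}_{v},
\]
which is well defined because $\beta^{r}=\gamma^{v}=1$. (Equivalently, $A$ is a diagonal folding of the pseudo-random sequence $s_{k}=T(\alpha^{k})$ of period $N$ onto the $r\times v$ torus via the Chinese Remainder isomorphism $\mathbb{Z}_{N}\cong\mathbb{Z}_{r}\times\mathbb{Z}_{v}$ that $\gcd(r,v)=1$ provides.) The torus has $rv=2^{nm}-1$ cells and there are exactly $2^{nm}-1$ nonzero $n\times m$ binary arrays, so it suffices to show that the $n\times m$ window read (with wraparound) at any cell is nonzero and that distinct cells yield distinct windows.

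Next I would rewrite the window at cell $(a,b)$ as $W_{a,b}[i,j]=A[a+i,b+j]=T(\theta\,\beta^{i}\gamma^{j})$ with $\theta=\beta^{a}\gamma^{b}$ and $0\le i<n$, $0\le j<m$. Since $r$ and $v$ are coprime, $\langle\beta\rangle\cap\langle\gamma\rangle=\{1\}$, so $(a,b)\mapsto\beta^{a}\gamma^{b}$ is a bijection from $\mathbb{Z}_{r}\times\mathbb{Z}_{v}$ onto $\langle\beta\rangle\langle\gamma\rangle=\mathbb{F}^{*}$; thus as $(a,b)$ runs over all cells, $\theta$ runs over all of $\mathbb{F}^{*}$. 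Both claims then reduce to one statement: the $\mathbb{F}_{2}$-linear map $\theta\mapsto\big(T(\theta\beta^{i}\gamma^{j})\big)_{0\le i<n,\,0\le j<m}$ from $\mathbb{F}$ to $\mathbb{F}_{2}^{n\times m}$ has trivial kernel. Indeed an injective linear map between $\mathbb{F}_{2}$-spaces of the same dimension $nm$ is a bijection, which at once forces $\theta=0$ to be the unique $\theta$ with zero window and shows every nonzero window is hit. Because the trace form $(x,y)\mapsto T(xy)$ on $\mathbb{F}$ is nondegenerate, this kernel is trivial precisely when the $nm$ field elements $\beta^{i}\gamma^{j}$ ($0\le i<n$, $0\le j<m$) span, hence form a basis of, $\mathbb{F}$ over $\mathbb{F}_{2}$.

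The hard part will be establishing that basis property, and it is exactly here that $\gcd(r,v)=1$ enters. Since $\beta$ has order $2^{n}-1$ it is a primitive element of the subfield $\mathbb{F}_{2^{n}}\subseteq\mathbb{F}_{2^{nm}}$, so $\{1,\beta,\dots,\beta^{n-1}\}$ is an $\mathbb{F}_{2}$-basis of $\mathbb{F}_{2^{n}}$; hence it remains only to show $\{1,\gamma,\dots,\gamma^{m-1}\}$ is an $\mathbb{F}_{2^{n}}$-basis of $\mathbb{F}_{2^{nm}}$, i.e. that $\gamma$ has degree $m$ over $\mathbb{F}_{2^{n}}$, after which the standard tower-of-bases argument assembles the two into the required basis of $\mathbb{F}_{2^{nm}}$ over $\mathbb{F}_{2}$. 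The degree of $\gamma$ over $\mathbb{F}_{2^{n}}$ is the least $t$ with $v\mid 2^{nt}-1$, and this $t$ divides $m$ because $v\mid 2^{nm}-1$; if $t<m$, then $2^{nt}-1$ would be a common multiple of $v$ and $2^{n}-1$, so since $\gcd(v,2^{n}-1)=1$ their least common multiple $v(2^{n}-1)=2^{nm}-1$ would divide $2^{nt}-1$, which is absurd. Hence $t=m$, the construction goes through, and a closing remark will note that extending the torus by appending copies of its first $n-1$ rows and first $m-1$ columns turns it into an $(r,v;n,m)_{2}$-PNarray in the sense of the definition.
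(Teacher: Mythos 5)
The paper does not prove this statement at all: it is quoted as background from \cite{PNSeq1988}, so there is no internal proof to compare against. Judged on its own, your argument is correct and complete. You have (rightly) read the displayed size as $v=(2^{nm}-1)/r$ --- the ``$2^{nm-1}/r$'' in the statement is a typo, since that quantity is not even an integer --- and your construction $A[i,j]=T(\beta^i\gamma^j)$ is exactly the trace-function form of the MacWilliams--Sloane diagonal folding. The three pillars all check out: $(a,b)\mapsto\beta^a\gamma^b$ is a bijection onto $\mathbb{F}^*$ because $\langle\beta\rangle\cap\langle\gamma\rangle$ is trivial when $\gcd(r,v)=1$; nondegeneracy of the trace form reduces everything to $\{\beta^i\gamma^j\}$ spanning; and the degree computation for $\gamma$ over $\mathbb{F}_{2^n}$ is where $\gcd(r,v)=\gcd(2^n-1,v)=1$ genuinely enters, via $\mathrm{lcm}(v,2^n-1)=2^{nm}-1$. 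The counting step ($rv=2^{nm}-1$ cells versus $2^{nm}-1$ nonzero windows, plus injectivity of a linear map between spaces of equal dimension $nm$) then delivers ``every nonzero window exactly once.'' The only looseness is the parenthetical identification with the folding $s_k=T(\alpha^k)$ placed at $(k\bmod r,\,k\bmod v)$: your indexing $\beta^i\gamma^j=\alpha^{vi+rj}$ differs from that folding by a decimation of the row and column indices, so the two arrays agree only up to such a relabeling; but since you prove the window property directly for your $A$ and never lean on that remark, nothing is at stake.
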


These results mostly concern the problem of existence and do not explicitly discuss algorithms or complexity for locating specific structures.  To deal with the decoding problem, several others considered alternative window shapes.  This method will be explored and generalized in the following section.  

Additionally, it is conjectured in \cite{HMP1996} that the following necessary conditions are also sufficient.

\begin{conj}
    \emph{\cite{HMP1996}}  There exists an $(r,v;n,m)_d$-array whenever:
    \begin{enumerate}
            \item  $rv = d^{nm}$ (only if it is a torus),
            \item  $r>n$ or $r=n=1$, and
            \item  $v>m$ or $v=m=1$.
        \end{enumerate}
\end{conj}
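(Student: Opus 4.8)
This statement is a long-standing open conjecture, so what follows is the natural line of attack together with the point at which it stalls, rather than a finished argument. The necessity half is routine and already appears in \cite{HMP1996}: each $n\times m$ window sits at one of the $rv$ positions of the torus, so if every window occurs exactly once then $rv=d^{nm}$; and if $1<r\le n$ then a vertically wrapped window is forced to align a row with itself, which (except when $r=n=1$) produces two distinct positions carrying the same content, and symmetrically for $v$ and $m$. The content of the conjecture is therefore the sufficiency direction, which I would approach by first reducing the alphabet size to prime powers, then giving an algebraic construction in the prime-power case, and finally patching in the degenerate ``thin'' cases.

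The reduction step is a Chinese-Remainder ``zipping'': if $d=d_1d_2$ with $\gcd(d_1,d_2)=1$, and $A$ is an $(r,v;n,m)_{d_1}$-array and $B$ an $(r,v;n,m)_{d_2}$-array on the \emph{same} $r\times v$ grid, then the coordinatewise array $(A,B)$, read through a bijection $[d_1]\times[d_2]\cong[d]$, is an $(r,v;n,m)_d$-array. So it would suffice, for each prime $p$ dividing $d$, to realize factors $r_i\mid r$ and $v_i\mid v$ with $r_iv_i=p^{\alpha_i nm}$ on which a prime-power array exists. The catch is that these local factorizations must refine the \emph{same} pair $(r,v)$ simultaneously while each remains admissible (that is, $r_i>n$ or $v_i>m$), and establishing that such a simultaneous refinement always exists is an auxiliary number-theoretic lemma that is not obviously true.

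For $d=p$ prime, the PNarray theorem quoted above produces arrays in which every \emph{nonzero} window appears exactly once on certain near-maximal grids \cite{PNSeq1988}; the plan is to (i) broaden the achievable PNarray shapes using LFSRs over $\mathbb{F}_{p^k}$ together with interleaving and decimation, (ii) reinsert the missing all-zero windows by a local ``surgery'' along a row --- the two-dimensional analogue of the splice that upgrades a pseudo-random sequence to a full de Bruijn sequence --- and (iii) lift from $\mathbb{F}_p$ to $\mathbb{F}_{p^k}$ by a tower construction. One then handles $n=1$, which collapses to a one-dimensional de Bruijn problem along the columns with a prescribed row period (and $m=1$ symmetrically), and the boundary cases $r=n+1$ and $v=m+1$ by ad hoc constructions, and finally assembles everything through the zipping of the reduction step, together with the existing results of Etzion \cite{Etzion1988} and Hurlbert--Isaak \cite{HI1995} to cover the shapes they already give.

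The hard part --- and the reason the conjecture is still open for general $d$ --- is precisely the interaction between the shape-matching in the reduction step and step (ii) of the prime-power construction: the known explicit constructions realize only sparse families of pairs $(r,v)$, and there is no construction flexible enough to hit \emph{every} factorization $rv=d^{nm}$ with $r>n$ and $v>m$. Carrying out the zero-window surgery on a torus without destroying the ``exactly once'' property appears to be the genuinely difficult step; even the binary case demanded considerable effort, and the full $d$-ary statement is, as far as we know, unresolved.
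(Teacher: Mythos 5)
This statement is not proved in the paper, and cannot be compared against a proof there: it is stated verbatim as a conjecture of Hurlbert, Mitchell, and Paterson \cite{HMP1996}, quoted only as background, and it remains open. You have correctly recognized this, and your write-up is honest about being a plan of attack rather than an argument; that is the right call. Two small remarks. First, the necessity half you sketch (counting positions to get $rv=d^{nm}$, and the wrap-around/periodicity argument ruling out $1<r\le n$) is indeed routine and is not the content of the conjecture, which asserts the converse, so it earns you little. Second, your proposed sufficiency programme --- CRT zipping across coprime factors of $d$, prime-power constructions via LFSRs and interleaving, zero-window surgery to upgrade pseudo-random arrays, and ad hoc handling of thin cases --- is essentially the strategy behind the known partial results of Etzion \cite{Etzion1988} and Hurlbert--Isaak \cite{HI1995} cited in the paper, and you correctly identify where it stalls: those constructions hit only sparse families of admissible pairs $(r,v)$, and neither the simultaneous-refinement lemma in the zipping step nor the two-dimensional zero-window splice is known to work in the required generality. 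There is no gap to report beyond the one you have already flagged yourself; just be aware that nothing in this paper (or elsewhere, to date) closes it.
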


\section{The De Bruijn Torus Problem}\label{torus}

In order to use the structures defined in Section 2 for applications like robot location, we must be able to determine efficiently where a particular subsequence occurs.  A partial solution to this problem is presented in \cite{TEtzion, Scheinerman}, in which the authors consider cross-shaped block patterns rather than a rectangular subarray.  These cross-shaped patterns have a set of consecutive blocks horizontally and a set of consecutive blocks vertically, and these sets overlap in exactly one block.  That is, if we have $n$ horizontal blocks and $k$ vertical blocks, the cross contains a total of $n+k-1$ blocks.  In this section, we will present this approach and expand upon it to allow for a more generalized set of pattern rules.  In the next section, we will also consider universal cycles over other types of combinatorial objects instead of solely using de Bruijn sequences over $d$-ary strings.

We begin with some more definitions.

\begin{defn}
    Let $\mathcal{G}$ be a group of order $d$.  Define an equivalence relation on $\mathcal{G}^n$ (or $d$-ary $n$-tuples) as follows.  We set $x \equiv y$ if and only if $x-y = c \cdot (1,1, \ldots , 1)$ for some $c \in \mathcal{G}$.  Then we define the \textbf{quotient de Bruijn string}, $D$, to be a string of length $d^{n-1}$ such that every equivalence class has exactly one representative appear exactly once in $D$.
\end{defn}

For example, a quotient string for binary de Bruijn sequences is known as the half de Bruijn sequence.  This string utilizes the equivalence relation given by $x \sim y$ if and only if $$(x_1+y_1, x_2+y_2, \ldots , x_n+y_n)  \in \{0^n, 1^n\},$$ with the bitwise addition performed modulo 2.  For a good discussion on the computational complexity of constructing these half de Bruijn sequences (otherwise known as complement-free de Bruijn sequences), see \cite{Scheinerman}.  For quotient de Bruijn strings over larger alphabets, we provide the following result.

\begin{thm}
    Let $\mathcal{G}$ be a group of order $d$ with operation `$+$'.  Let $\mathcal{A}=a_0a_1 \ldots a_{d^{n-1}-1}$ be a $d$-ary de Bruijn sequence for $(n-1)$-tuples.  Define the string $\mathcal{D} = d_0d_1 \ldots d_{d^{n-1}-1}$ such that $d_{i+1}=a_i+d_i$.  Then $\mathcal{D}$ is a quotient de Bruijn string.
\end{thm}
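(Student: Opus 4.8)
\emph{Proof strategy.} The plan is to exploit the fact that $\mathcal{D}$ is an ``antiderivative'' of $\mathcal{A}$: the recurrence $d_{i+1}=a_i+d_i$ says exactly that the consecutive differences of $\mathcal{D}$ reproduce $\mathcal{A}$. Write $W_i=(d_i,d_{i+1},\dots,d_{i+n-1})$ for the length-$n$ window of $\mathcal{D}$ starting at position $i$, and $V_i=(a_i,a_{i+1},\dots,a_{i+n-2})$ for the length-$(n-1)$ window of $\mathcal{A}$ starting at $i$ (all indices read modulo $d^{n-1}$). Since $d_{i+k}-d_{i+k-1}=a_{i+k-1}$, the window $V_i$ is precisely the coordinatewise difference sequence of $W_i$, and conversely $W_i$ is recovered from $V_i$ together with the single value $d_i$ by summing. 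Before using this one should check that $\mathcal{D}$ really is a cyclic string of length $d^{n-1}$, i.e.\ that running the recurrence once around closes up; this amounts to $\sum_i a_i=0$ in $\mathcal{G}$, which holds whenever $d^{n-2}\sum_{c\in\mathcal{G}}c=0$ (each symbol of $\mathcal{G}$ occurs exactly $d^{n-2}$ times among the $a_i$ in a de Bruijn sequence for $(n-1)$-tuples) --- in particular in the binary case with $n\ge 3$.

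The crux is to show that for all $i,j$,
\[
W_i\equiv W_j\ \Longleftrightarrow\ V_i=V_j.
\]
For the forward implication, if $W_i-W_j=(c,c,\dots,c)$ then differencing consecutive coordinates cancels the constant $c$, so $a_{i+k}=a_{j+k}$ for $0\le k\le n-2$, i.e.\ $V_i=V_j$. For the converse, if $V_i=V_j$ then $d_{i+k}-d_{j+k}$ does not depend on $k$ (passing from $k$ to $k+1$ adds the common value $a_{i+k}=a_{j+k}$), so $W_i-W_j=(c,\dots,c)$ with $c=d_i-d_j$, and hence $W_i\equiv W_j$.

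Finally I would conclude by counting. Because $\mathcal{A}$ is a de Bruijn sequence for $(n-1)$-tuples, its $d^{n-1}$ windows $V_0,\dots,V_{d^{n-1}-1}$ are pairwise distinct; by the equivalence just established, the $d^{n-1}$ windows $W_0,\dots,W_{d^{n-1}-1}$ of $\mathcal{D}$ lie in pairwise distinct equivalence classes of $\mathcal{G}^n$ (in particular they are distinct as tuples, since equal tuples are equivalent with $c=0$). Each class has exactly $d$ elements, since $c\mapsto x+c(1,\dots,1)$ is injective, so there are $d^n/d=d^{n-1}$ classes in all. Hence the windows of $\mathcal{D}$ form a transversal of the equivalence classes: every class is met by exactly one window, and that window occurs exactly once in $\mathcal{D}$ --- which is the definition of a quotient de Bruijn string.

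I expect the genuine content to be exactly the equivalence in the middle paragraph: translation-invariance of differencing gives ``$\Leftarrow$'', while ``$\Rightarrow$'' records that the only ambiguity in antidifferentiating $\mathcal{A}$ is the global additive constant $d_0$, i.e.\ that the diagonal subgroup $\{\,c(1,\dots,1):c\in\mathcal{G}\,\}$ is precisely the kernel of reconstruction --- so it is essential that the equivalence relation was defined using exactly this subgroup. The remaining points are bookkeeping: the cyclic closure of $\mathcal{D}$ noted above, and, if one wishes to allow non-abelian $\mathcal{G}$, checking that the coordinatewise cancellations still go through (everything stays inside a single coordinate, but the step $a_{i+k}=a_{j+k}$ in ``$\Rightarrow$'' uses commutativity, so the cleanest route is to take $\mathcal{G}$ abelian, as the subtractive notation already suggests).
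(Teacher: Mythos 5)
Your proof is correct and follows essentially the same route as the paper's: both reduce equivalence of two length-$n$ windows of $\mathcal{D}$ to equality of the corresponding length-$(n-1)$ difference windows of $\mathcal{A}$ (you phrase this as a biconditional, the paper argues by contradiction from two equivalent representatives), and both close with the count of $d^{n-1}$ windows against $d^{n-1}$ equivalence classes. Your side remarks on cyclic closure (needing $\sum_i a_i = 0$ for the recurrence to wrap around) and on commutativity flag genuine edge cases that the paper's proof passes over silently, but they do not change the substance of the argument.
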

\begin{proof}
    First, we note that the equivalence classes for $d$-ary strings have size $d$, and the union of all equivalence classes for $d$-ary strings of length $n$ has size $d^n$.  Thus since $\mathcal{D}$ contains $d^{n-1}$ different strings, we need only show that no two representatives from the same equivalence class appear in $\mathcal{D}$.

    To show this, we proceed by contradiction and suppose that strings $(x_1+k)(x_2+k) \cdots (x_n+k)$ and $(x_1+\ell)(x_2+\ell) \cdots (x_n+\ell)$ appear starting in positions $p$ and $q$ in $\mathcal{D}$, respectively.  Then we have the following equalities.

    $$\begin{array}{rclcl}
        d_p & = & x_1+k & = & a_{p-1}+d_{p-1} \\
        d_{p+1} & = & x_2+k & = & a_p + d_p \\
        & \vdots & & \vdots & \\
        d_{p+n-1} & = & x_n+k & = & a_{p+n-2} + d_{p+n-2}
    \end{array}$$

    and

    $$\begin{array}{rclcl}
        d_q & = & x_1+\ell & = & a_{q-1}+d_{q-1} \\
        d_{q+1} & = & x_2+\ell & = & a_q + d_q \\
        & \vdots & & \vdots & \\
        d_{q+n-1} & = & x_n+\ell & = & a_{q+n-2} + d_{q+n-2}
    \end{array}$$

    Note that these simplify down to the following.

    $$\begin{array}{rclcl}
        a_p & = & x_2-x_1 & = & a_q \\
        a_{p+1} & = & x_3-x_2 & = & a_{q+1} \\
        & \vdots & & \vdots & \\
        a_{p+n-2} & = & x_n - x_{n-1} & = & a_{q+n-2}
    \end{array}$$

    Thus if both $(x_1+k)(x_2+k) \cdots (x_n+k)$ and $(x_1+\ell)(x_2+\ell) \cdots (x_n+\ell)$ appear in $\mathcal{D}$, then the $(n-1)$-tuple $(x_2-x_1)(x_3-x_2) \cdots (x_n-x_{n-1})$ appears twice in $\mathcal{A}$, which contradicts that $\mathcal{A}$ is a de Bruijn sequence.
\end{proof}

We will use these quotient strings in the construction of a torus, as defined below.

\begin{defn}
    Let $\mathcal{C}$ be a universal cycle for a set of objects over an alphabet of size $d$, and let $\mathcal{Q}$ be a quotient string for a (possibly different) set of objects over the same alphabet of size $d$.  The $\mathcal{Q} \times \mathcal{C}$ \textbf{grid} (or \textbf{torus}) is the rectangular grid with rows labeled from $1$ to $|\mathcal{Q}|$ and columns labeled from $1$ to $|\mathcal{C}|$, and where the entry in row $i$ and column $j$ is $\mathcal{C}_j +\mathcal{Q}_i$ where `$+$' denotes the binary operation for a group $\mathcal{G}$ of order $d$ utilizing symbols from our common alphabet.
\end{defn}

For example, when considering binary de Bruijn sequences, there will be $2^n$ columns and $2^{k-1}$ rows, and addition will be in $\mathbb{Z}/2\mathbb{Z}$.  When considering patterns in the $k \times n$ torus, we will use the following definition of block patterns, which will be possible window patterns for our grids.

\begin{defn}
    A $k \times n$ \textbf{block pattern} in a $\mathcal{Q} \times \mathcal{C}$ torus is a selection of entries (or \textbf{blocks}) within a subarray of dimension $k \times n$.  Here and in other literature this is often also referred to as a \textbf{window}.  When considering not just the general shape of a block pattern but a specific instance of a block pattern in a grid, we say that the block pattern is \textbf{filled}.
\end{defn}

For example, one possible block pattern in a $4 \times 4$ region is given in Figure \ref{BP}.  In this example, the black blocks represent our block pattern.  Several of the latest de Bruijn torus results utilize cross-shaped patterns.  These are patterns that contain $n$ consecutive blocks in one row, $k$ consecutive blocks in one column, and one block in common for a total of $n+k-1$ blocks.  For a cross-shaped block pattern, the following theorem gives a nice result.

\begin{thm}\label{EtzionCross}
    \emph{\cite{TEtzion}}  Let $\mathcal{D}$ be a binary de Bruijn sequence of order $n$ and $\mathcal{Q}$ be a half de Bruijn sequence of order $k$.  Produce the $\mathcal{Q} \times \mathcal{D}$ grid using binary addition.  Fix some block pattern $\mathcal{B}$ that is a $k \times n$ cross (i.e. $n$ entries in one row, $k$ entries in one column, overlapping in exactly one block).  Then in the constructed grid we can find every binary-filled block pattern $\mathcal{B}$ exactly once.
\end{thm}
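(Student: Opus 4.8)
The plan is to analyze a filled $k\times n$ cross placed in the $\mathcal{Q}\times\mathcal{D}$ grid and show that the entire set of filled crosses is in bijection with the set of possible fillings, by a counting argument: since the number of filled crosses appearing in the grid is exactly $|\mathcal{Q}|\cdot|\mathcal{D}| = 2^{k-1}\cdot 2^n = 2^{n+k-1}$, which equals the number of binary fillings of an $(n+k-1)$-cell cross, it suffices to prove that no filled cross appears twice. So I would argue by contradiction: suppose a fixed filled cross $\mathcal{B}$ occurs with its central cell at positions $(i_1,j_1)$ and $(i_2,j_2)$.

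First I would set up coordinates. Say the horizontal arm of the cross occupies columns $j, j+1, \dots, j+n-1$ within row $i$, and the vertical arm occupies rows $i', i'+1, \dots, i'+k-1$ within column $j_c$, with the overlap cell at $(i,j_c)$ where $i'\le i\le i'+k-1$ and $j\le j_c\le j+n-1$. The key structural observation is that the grid entry in row $a$, column $b$ is $\mathcal{D}_b + \mathcal{Q}_a$ over $\mathbb{Z}/2\mathbb{Z}$. Hence along a single row $a$, the horizontal arm reads $(\mathcal{D}_j+\mathcal{Q}_a)(\mathcal{D}_{j+1}+\mathcal{Q}_a)\cdots(\mathcal{D}_{j+n-1}+\mathcal{Q}_a)$, which is the length-$n$ window of $\mathcal{D}$ starting at $j$, shifted by the constant $\mathcal{Q}_a$; and along a single column $b$, the vertical arm reads the length-$k$ window of $\mathcal{Q}$ starting at $i'$, shifted by the constant $\mathcal{D}_b$.

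Next I would extract the two consequences of a repeated filled cross. From the horizontal arms being equal as filled strings: the $\mathcal{D}$-window at $j_1$ shifted by $\mathcal{Q}_{i_1}$ equals the $\mathcal{D}$-window at $j_2$ shifted by $\mathcal{Q}_{i_2}$. Taking successive differences within each window kills the additive constants, so the first-difference sequences of the two $\mathcal{D}$-windows agree; by the argument in the proof of the quotient-string theorem above, equality of length-$n$ windows of a de Bruijn sequence up to a global shift forces $j_1=j_2$ (and then $\mathcal{Q}_{i_1}=\mathcal{Q}_{i_2}$). Wait — more carefully, the horizontal arm of $\mathcal{D}$ itself (not its quotient) is a genuine de Bruijn sequence of order $n$, so the length-$n$ window determines its start position only after we know the shift; but matching the two filled horizontal arms gives $\mathcal{D}_{j_1+t}+\mathcal{Q}_{i_1}=\mathcal{D}_{j_2+t}+\mathcal{Q}_{i_2}$ for all $t$, hence $\mathcal{D}_{j_1+t}=\mathcal{D}_{j_2+t}+c$ with $c=\mathcal{Q}_{i_1}+\mathcal{Q}_{i_2}$ constant. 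If $c=0$ then the window at $j_1$ equals the window at $j_2$ and $j_1=j_2$ since $\mathcal{D}$ is de Bruijn. If $c=1$, we would need two length-$n$ windows of $\mathcal{D}$ that are bitwise complements; this is exactly the situation a half de Bruijn sequence is built to avoid — but $\mathcal{D}$ is an ordinary de Bruijn sequence, so I must handle this. The resolution is that the vertical arm must then be used: from the vertical arms being equal, $\mathcal{Q}_{i_1+t}+\mathcal{D}_{j_c^{(1)}}=\mathcal{Q}_{i_2+t}+\mathcal{D}_{j_c^{(2)}}$ for all $t$, so $\mathcal{Q}_{i_1+t}=\mathcal{Q}_{i_2+t}+c'$ with $c'$ constant; since $\mathcal{Q}$ is a \emph{half} de Bruijn sequence it contains no two length-$k$ windows differing by a constant at all (that is precisely the complement-free property), forcing $i_1=i_2$ and $c'=0$. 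Then, returning to the horizontal relation and noting the overlap cell is shared, $c=0$ as well, so $j_1=j_2$. This gives the contradiction.

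The main obstacle I anticipate is bookkeeping the overlap cell correctly: the horizontal and vertical arms are not independent, and one must make sure the single shared block ties the two shift constants together (so that $c$ and $c'$ are forced equal, or at least that $c'=0$ drags $c$ to $0$). I would isolate this as a short lemma: in any filled cross, the value at the overlap cell equals $\mathcal{D}_{j_c}+\mathcal{Q}_i$, and this one equation, combined with $\mathcal{Q}_{i_1+t}=\mathcal{Q}_{i_2+t}$ for all $t$ (in particular at the row of the overlap) gives $\mathcal{D}_{j_c^{(1)}}=\mathcal{D}_{j_c^{(2)}}$, which with $j_c^{(1)}=j+ \delta$, $j_c^{(2)}=j_2+\delta$ for the same offset $\delta$ closes the loop. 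A secondary, purely cosmetic point is confirming the count $2^{k-1}\cdot 2^n$ matches $2^{(n+k-1)}$, which is immediate, so once injectivity is established the "exactly once" conclusion follows with no further work.
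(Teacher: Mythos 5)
Your proof is correct, but it runs in the opposite direction from the paper's. The paper does not actually prove Theorem~\ref{EtzionCross} itself (it is quoted from \cite{TEtzion}); instead it proves the generalization, Theorem~\ref{tree}, and the argument there is the \emph{surjectivity} half of the pigeonhole: given an arbitrary filling of the block pattern, one propagates a single unknown row label $a$ through the tree-shaped connection graph to obtain the full row projection $(r_1+x)\cdots(r_k+x)$ and column projection $(c_1+x)\cdots(c_n+x)$ up to a common shift $x$; the quotient-string property guarantees exactly one shift of the row projection occurs in $\mathcal{Q}$, which pins down $x$, and the de Bruijn property then locates the resulting column projection in $\mathcal{D}$. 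You instead prove the \emph{injectivity} half: no filled cross occurs at two positions, because equal vertical arms force, by the complement-free property of $\mathcal{Q}$ (no two length-$k$ windows differ by a constant), the same row window and a zero shift $c'$, which in turn forces the two horizontal windows of $\mathcal{D}$ to be equal and hence the same column. Both halves combine with the same count $2^{k-1}\cdot 2^{n} = 2^{n+k-1}$, which requires the torus interpretation so that every position supports a cross. What the paper's direction buys is the decoding algorithm --- the surjectivity argument is literally a recipe for locating a given filling, which is the point of the paper --- whereas your direction is a clean uniqueness argument but yields no search procedure. One small remark: once the half-de-Bruijn property forces the two vertical arms to occupy the same rows, the horizontal shift $c=\mathcal{Q}_{i_1}+\mathcal{Q}_{i_2}$ vanishes immediately, so your overlap-cell lemma, while correct, is not actually needed to close the loop.
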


We will generalize Theorem \ref{EtzionCross} so that the block pattern is not required to be cross-shaped, but is required to have certain horizontal and vertical projections, as well as a few additional restrictions on the connection graph for the given block pattern.  From a given block pattern, we want to create a graph in which blocks correspond to nodes, edges correspond to nearest neighbors in the north/south/east/west directions.

\begin{defn}
    The \textbf{connection graph} for a $k \times n$ block pattern is created as follows.  For each block in the block pattern, draw a node.  We then draw undirected edges corresponding to nearest neighbors in each direction (north, south, east, and west).
\end{defn}

Figure \ref{BP} shows a block pattern and its corresponding connection graph.  In order to consider finding patterns that will satisfy de Bruijn-type properties, we will need a few more definitions.

\begin{figure}
\begin{center}
\begin{tikzpicture}[scale= 1 ]

\node[fill=black,inner sep=0.5cm,outer sep=0pt,anchor=south west] at (0,3) {};
\node[fill=black,inner sep=0.5cm,outer sep=0pt,anchor=south west] at (1,3) {};
\node[fill=black,inner sep=0.5cm,outer sep=0pt,anchor=south west] at (2,3) {};
\node[fill=black,inner sep=0.5cm,outer sep=0pt,anchor=south west] at (1,2) {};
\node[fill=black,inner sep=0.5cm,outer sep=0pt,anchor=south west] at (3,2) {};
\node[fill=black,inner sep=0.5cm,outer sep=0pt,anchor=south west] at (2,0) {};

  \draw[step=1,help lines] (0,0) grid (4,4);

\end{tikzpicture}
\hspace{10mm}
\begin{tikzpicture}[-,>=stealth',auto,node distance=2cm,
  thick,main node/.style={circle,draw,font=\sffamily\bfseries}]

  \node[main node] (1) at (0,3)             {};
  \node[main node] (2) at (1,3)             {};
  \node[main node] (3) at (2,3)             {};
  \node[main node] (4) at (1,2)             {};
  \node[main node] (5) at (3,2)             {};
  \node[main node] (6) at (2,0)             {};

  \path[every node/.style={font=\sffamily\footnotesize}]
    (1) edge node [left]        {} (2)
    (2) edge node [left]        {} (3)
    (2) edge node [left]        {} (4)
    (3) edge node [left]        {} (6)
    (4) edge node [right]       {} (5)
    ;

\end{tikzpicture}
\end{center}
\caption{A possible block pattern in a $4 \times 4$ region and its corresponding connection graph.}\label{BP}
\end{figure}
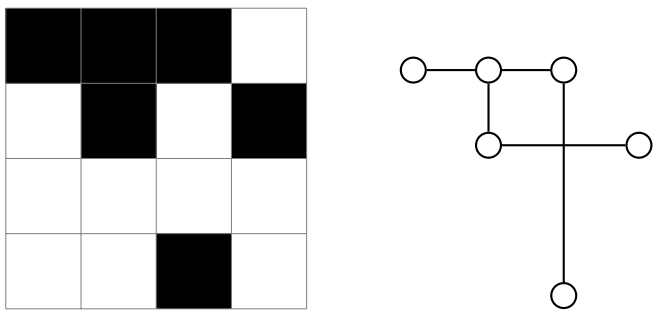

\begin{defn}
    The \textbf{projection} of an $s \times t$ block pattern on the horizontal is given as a binary $t$-sequence in which entry $i$ equals $1$ if and only if there is at least one block used in column $i$ for $i \in [t]$.  Similarly, the projection on the vertical is a binary $s$-sequence in which we consider the rows instead of columns.  When considering a filled block pattern, we replace the 0's with `-' and the 1's with the appropriate row or column entry.  For example, in Figure \ref{DBGEx}, the row projection is $(a, -, -, \overline{a}, -, -, \overline{a})$ and the column projection is $(a, \overline{a}, -, -, \overline{a})$.
\end{defn}

Now we may consider block patterns with non-consecutive projections, rather than simply the standard de Bruijn sequence.  One variation used will be combs and their corresponding sequences.

\begin{defn}
    A \textbf{comb} of order $n$ is a binary sequence $x_0x_1 \ldots x_{s}$ such that it contains exactly $n$ ones.  We will represent the comb as a sequence of indices for the non-zero entries, i.e. the comb $01011$ corresponds to sequence $[1,3,4]$.
\end{defn}

We think of combs as a modified window.  For standard de Bruijn sequences, we use the comb $11 \cdots 1$.  However, an alternative comb for binary de Bruijn sequences for strings of length 3 is $[0,2,4]$.  Following the notation of \cite{CombsGraham}, this corresponds to the window \texttt{O*O*O}.  One de Bruijn sequence for this comb is the following.  $$11010100$$  This window produces the following sequence of binary triples:  $100,111,000,110$, $001,101,010,011$.  De Bruijn sequences for given comb patterns have been studied in the literature.  For example, see \cite{CombsGraham, Cooper}.

Note that our projection must match the structure of the objects used to create the torus.  In terms of our example in Figure \ref{DBGEx}, this means that we must have a binary de Bruijn sequence for strings of length 3 with comb $[0,1,4]$ and a binary quotient string for strings of length 3 with comb $[0,3,6]$.

The following theorem is our main result, and serves to generalize Theorem \ref{EtzionCross} from \cite{TEtzion}.  Instead of using simple $n \times k$ cross shaped block patterns, our result allows for a wide variety of block patterns, from block patterns containing $n+k-1$ blocks in an $n \times k$ subgrid to block patterns of $n+k-1$ block contained in a much larger subgrid with a variety of gaps in their projections (i.e. the projections are combs).

\begin{thm}\label{tree}
    Let $\mathcal{S}$ be a de Bruijn sequence for strings of length $n$ over an alphabet of size $d$ with a given comb pattern $P_1$.  Let $\mathcal{Q}$ be a quotient string over the same alphabet of size $d$ with comb pattern $P_2$ for strings of length $k$.  Place $n+k-1$ blocks on the grid in a block pattern $\mathcal{B}$ so the connection graph for $\mathcal{B}$ is a tree, and so that the horizontal projection of $\mathcal{B}$ corresponds to $P_1$ and the vertical projection of $\mathcal{B}$ corresponds to $P_2$.  Then the grid $\mathcal{T}$ produced by $\mathcal{Q} \times \mathcal{S}$ and addition modulo $d$ contains every possible $n+k-1$ binary combination in block pattern $\mathcal{B}$ exactly once.
\end{thm}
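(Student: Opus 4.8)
The plan is to prove the statement by a counting argument: a placement of $\mathcal{B}$ on the torus is an offset $(p,q)\in\mathbb{Z}_{d^{k-1}}\times\mathbb{Z}_{d^{n}}$, under which a block $t$ of $\mathcal{B}$ occupying position $(\rho_t,\kappa_t)$ inside $\mathcal{B}$ — with $\rho_t$ in the support $\{\delta_1,\dots,\delta_k\}$ of $P_2$ and $\kappa_t$ in the support $\{\gamma_1,\dots,\gamma_n\}$ of $P_1$ — receives the torus entry $\mathcal{S}_{q+\kappa_t}+\mathcal{Q}_{p+\rho_t}$ (indices read cyclically). There are $d^{k-1}\cdot d^{n}=d^{n+k-1}=d^{|\mathcal{B}|}$ placements, which is exactly the number of $d$-ary fillings of $\mathcal{B}$, so I would show that the map from placements to fillings is injective; an injection between equinumerous finite sets is a bijection, and a bijection here is precisely the claim that every filling of $\mathcal{B}$ occurs exactly once. (I will tacitly use that the spans of $P_1$ and $P_2$ fit inside $\mathcal{T}$, which must hold anyway for $\mathcal{B}$ to be a legitimate window.)

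For injectivity, suppose placements at offsets $(p,q)$ and $(p',q')$ yield the same filling. Then for each block $t$ one has $\mathcal{S}_{q+\kappa_t}+\mathcal{Q}_{p+\rho_t}=\mathcal{S}_{q'+\kappa_t}+\mathcal{Q}_{p'+\rho_t}$, i.e. $\mathcal{S}_{q+\kappa_t}-\mathcal{S}_{q'+\kappa_t}=\mathcal{Q}_{p'+\rho_t}-\mathcal{Q}_{p+\rho_t}$. The left-hand side depends only on the column $\kappa_t$ and the right-hand side only on the row $\rho_t$; denote them $F(\kappa_t)$ and $G(\rho_t)$. The key step — and the one I expect to be the real obstacle — is to show that all of $F(\gamma_1),\dots,F(\gamma_n),G(\delta_1),\dots,G(\delta_k)$ are one common constant $c\in\mathcal{G}$. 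I would do this by introducing the incidence graph $H$ whose vertex set is $\{\gamma_1,\dots,\gamma_n\}\sqcup\{\delta_1,\dots,\delta_k\}$ and which has one edge $\{\kappa_t,\rho_t\}$ per block $t$. Each edge of the connection graph of $\mathcal{B}$ joins two blocks sharing a row or a column, hence two edges of $H$ sharing a vertex, so a path in the connection graph becomes a chain of pairwise-adjacent edges of $H$; since the connection graph is a tree, hence connected, and every used row and column carries a block, $H$ is connected. Because $F(\kappa_t)=G(\rho_t)$ holds along every edge of $H$, connectedness of $H$ propagates this equality to a single value $c$.

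To conclude, I would feed $c$ back into the two sequence properties. From $G\equiv c$, the $k$-tuples read off $\mathcal{Q}$ through comb $P_2$ at positions $p$ and $p'$ differ by a $\mathcal{G}$-multiple of $(1,1,\dots,1)$ and so lie in the same equivalence class; as $\mathcal{Q}$ is a quotient string, that class has a unique representative occurring in $\mathcal{Q}$ and it occurs once, which forces $p\equiv p'\pmod{d^{k-1}}$ and hence $c=0$. Then $F\equiv 0$ makes the $n$-tuples read off $\mathcal{S}$ through comb $P_1$ at positions $q$ and $q'$ equal, and since $\mathcal{S}$ is a de Bruijn sequence for that comb each $n$-tuple occurs once, so $q\equiv q'\pmod{d^{n}}$; thus the two placements coincide and injectivity follows. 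I expect the only subtleties beyond the constant-collapsing step to be bookkeeping — matching indices on the torus and the minor check on comb spans — and for a non-abelian $\mathcal{G}$ one simply reads all differences multiplicatively, leaving the argument intact. A disconnected connection graph would break the proof, since it would permit the constants on the separate components to be chosen independently and $(p,q)$ could no longer be recovered; this is where the tree hypothesis earns its keep.
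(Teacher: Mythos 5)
Your proof is correct, but it completes the counting argument in the opposite direction from the paper. Both arguments begin by observing that the number of placements, $d^{k-1}\cdot d^{n}=d^{n+k-1}$, equals the number of fillings of $\mathcal{B}$; the paper then proves \emph{surjectivity} (every filling occurs at least once), whereas you prove \emph{injectivity} (no filling occurs twice). Concretely, the paper roots the tree, orders the blocks by breadth-first search, and propagates row/column labels from an unknown root label $a$, so that the column projection is forced up to a shift and the de Bruijn and quotient properties then produce an actual location; your version instead takes two placements with the same filling, packages the discrepancies into functions $F$ on columns and $G$ on rows, and propagates the identity $F(\kappa_t)=G(\rho_t)$ across the bipartite row--column incidence graph $H$ to a single constant $c$, which the quotient-string property kills. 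The two propagation steps are really the same use of the tree hypothesis seen from opposite ends (note that with $n+k-1$ blocks and full projections, $H$ has $n+k$ vertices and $n+k-1$ edges, so connectedness of the connection graph already forces $H$ to be a tree). What the paper's direction buys is that the surjectivity proof \emph{is} the decoding algorithm --- given a filled pattern it tells you where to look --- which is the point of the whole construction; your direction is arguably cleaner as pure mathematics (no need to choose a root or an ordering, and the role of connectedness is isolated exactly where it is needed), but it is non-constructive and would have to be supplemented by the paper's label-propagation anyway to obtain the $\mathcal{O}(|\mathcal{Q}|+|\mathcal{S}|)$ search. Your closing remarks are also consistent with the paper's follow-up theorems: independent constants on separate components is precisely what produces the $d^{c-1}$ multiplicity in the forest case.
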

\begin{proof}
    The total number of possible $(n+k-1)$ combinations is $d^{n+k-1}$.  We know that $\mathcal{S}$ has length $d^n$ and $\mathcal{Q}$ has length $d^{k-1}$, so our grid has size $d^{n+k-1}$.  Thus if we show that we can find any $(n+k-1)$-string in our block pattern, we are done.

    Since our connection graph is a tree, label our blocks $B_1, B_2, \ldots , B_{n+k-1}$ so that $B_1$ is a root and every child appears after its parent.  This can be done using a breadth-first search method.  Fill in the blocks in this order arbitrarily with bits to make an arbitrary $(n+k-1)$-binary string $x_1x_2 \ldots x_{n+k-1}$.

    If our string is to appear in the grid, we can determine all row/column labels as follows.  Suppose $B_1$ appears in row labelled $a$.  Then $B_1$ must be in column $a+x_1$ modulo $d$.  As we work through the list in this fashion, each block $B_i$ must have exactly one of the row or column labels assigned as we reach it in the list.  Call the known label $\alpha$.  Then the remaining label to be determined will always be $\alpha + x_i$.  Thus we can determine the projection sequence $r_1r_2 \ldots r_k$ for the row labels and $c_1c_2 \ldots c_n$ for the column labels, all solely based on our original label $a$.  Since $\mathcal{S}$ is a de Bruijn sequence, we know that $(c_1+x)(c_2+x) \ldots (c_n+x)$ must appear in $\mathcal{S}$ for all choices of $x$.  Since $\mathcal{Q}$ is a quotient string, exactly one of $(r_1+x)(r_2+x) \ldots (r_k+x)$ must appear.  If $(r_1+x)(r_2+x) \ldots (r_k+x)$ appears, our block pattern for this $(n+k-1)$ combination appears in $[(r_1+x)(r_2+x) \ldots (r_k+x)] \times [(c_1-x)(c_2-x) \ldots (c_n-x)]$.  
\end{proof}

We now give the following example with $n=3$ and $k=3$.  Suppose that our alphabet is $\A = \{0,1\}$ which will correspond to white/black, respectively.  Let our comb patterns be the following:  $P_1 = [0,1,4]$ and $P_2 = [0,3,6]$.  Let $\mathcal{S} = 11100100$ and $\mathcal{Q} = 1110$.  We construct the following torus $\mathcal{T}$ from $\mathcal{Q} \times \mathcal{S}$.
$$\begin{array}{cccccccc}
    0 & 0 & 0 & 1 & 1 & 0 & 1 & 1 \\
    0 & 0 & 0 & 1 & 1 & 0 & 1 & 1 \\
    0 & 0 & 0 & 1 & 1 & 0 & 1 & 1 \\
    1 & 1 & 1 & 0 & 0 & 1 & 0 & 0
\end{array}$$

We will construct our grid so as not to require wrap-around, and paint the tiles black/white according to our instruction.  This produces the grid shown in Figure \ref{DBGEx}.  Now suppose that in this grid we are looking for the specific block pattern shown next to the grid in the figure.  We also include the connection graph in the figure.  If our top left red block appears in column $a$, then we get the column/row values shown on the relevant rows and columns.

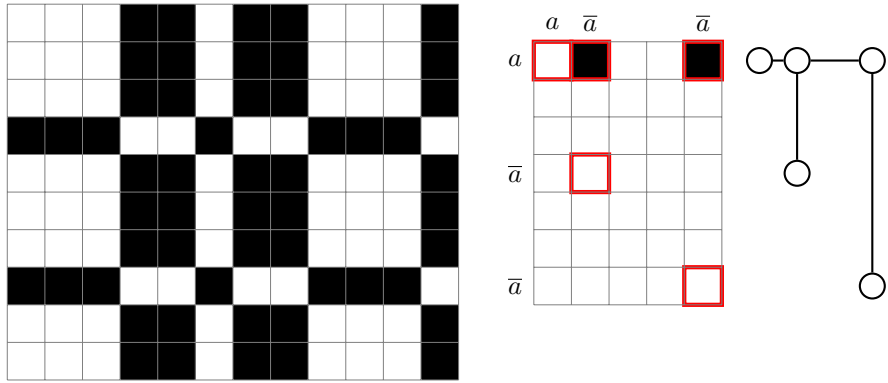
\begin{figure}
\begin{center}
\begin{tikzpicture}[-,>=stealth',auto,node distance=2cm,
  thick,main node/.style={circle,draw,font=\sffamily\bfseries}, scale= 0.5 ]

\node[fill=black,inner sep=0.25cm,outer sep=0pt,anchor=south west] at (0,2) {};
\node[fill=black,inner sep=0.25cm,outer sep=0pt,anchor=south west] at (0,6) {};
\node[fill=black,inner sep=0.25cm,outer sep=0pt,anchor=south west] at (1,2) {};
\node[fill=black,inner sep=0.25cm,outer sep=0pt,anchor=south west] at (1,6) {};
\node[fill=black,inner sep=0.25cm,outer sep=0pt,anchor=south west] at (2,2) {};
\node[fill=black,inner sep=0.25cm,outer sep=0pt,anchor=south west] at (2,6) {};
\node[fill=black,inner sep=0.25cm,outer sep=0pt,anchor=south west] at (5,2) {};
\node[fill=black,inner sep=0.25cm,outer sep=0pt,anchor=south west] at (5,6) {};
\node[fill=black,inner sep=0.25cm,outer sep=0pt,anchor=south west] at (8,2) {};
\node[fill=black,inner sep=0.25cm,outer sep=0pt,anchor=south west] at (8,6) {};
\node[fill=black,inner sep=0.25cm,outer sep=0pt,anchor=south west] at (9,2) {};
\node[fill=black,inner sep=0.25cm,outer sep=0pt,anchor=south west] at (9,6) {};
\node[fill=black,inner sep=0.25cm,outer sep=0pt,anchor=south west] at (10,2) {};
\node[fill=black,inner sep=0.25cm,outer sep=0pt,anchor=south west] at (10,6) {};

\node[fill=black,inner sep=0.25cm,outer sep=0pt,anchor=south west] at (3,0) {};
\node[fill=black,inner sep=0.25cm,outer sep=0pt,anchor=south west] at (3,1) {};
\node[fill=black,inner sep=0.25cm,outer sep=0pt,anchor=south west] at (3,3) {};
\node[fill=black,inner sep=0.25cm,outer sep=0pt,anchor=south west] at (3,4) {};
\node[fill=black,inner sep=0.25cm,outer sep=0pt,anchor=south west] at (3,5) {};
\node[fill=black,inner sep=0.25cm,outer sep=0pt,anchor=south west] at (3,7) {};
\node[fill=black,inner sep=0.25cm,outer sep=0pt,anchor=south west] at (3,8) {};
\node[fill=black,inner sep=0.25cm,outer sep=0pt,anchor=south west] at (3,9) {};
\node[fill=black,inner sep=0.25cm,outer sep=0pt,anchor=south west] at (4,0) {};
\node[fill=black,inner sep=0.25cm,outer sep=0pt,anchor=south west] at (4,1) {};
\node[fill=black,inner sep=0.25cm,outer sep=0pt,anchor=south west] at (4,3) {};
\node[fill=black,inner sep=0.25cm,outer sep=0pt,anchor=south west] at (4,4) {};
\node[fill=black,inner sep=0.25cm,outer sep=0pt,anchor=south west] at (4,5) {};
\node[fill=black,inner sep=0.25cm,outer sep=0pt,anchor=south west] at (4,7) {};
\node[fill=black,inner sep=0.25cm,outer sep=0pt,anchor=south west] at (4,8) {};
\node[fill=black,inner sep=0.25cm,outer sep=0pt,anchor=south west] at (4,9) {};
\node[fill=black,inner sep=0.25cm,outer sep=0pt,anchor=south west] at (6,0) {};
\node[fill=black,inner sep=0.25cm,outer sep=0pt,anchor=south west] at (6,1) {};
\node[fill=black,inner sep=0.25cm,outer sep=0pt,anchor=south west] at (6,3) {};
\node[fill=black,inner sep=0.25cm,outer sep=0pt,anchor=south west] at (6,4) {};
\node[fill=black,inner sep=0.25cm,outer sep=0pt,anchor=south west] at (6,5) {};
\node[fill=black,inner sep=0.25cm,outer sep=0pt,anchor=south west] at (6,7) {};
\node[fill=black,inner sep=0.25cm,outer sep=0pt,anchor=south west] at (6,8) {};
\node[fill=black,inner sep=0.25cm,outer sep=0pt,anchor=south west] at (6,9) {};
\node[fill=black,inner sep=0.25cm,outer sep=0pt,anchor=south west] at (7,0) {};
\node[fill=black,inner sep=0.25cm,outer sep=0pt,anchor=south west] at (7,1) {};
\node[fill=black,inner sep=0.25cm,outer sep=0pt,anchor=south west] at (7,3) {};
\node[fill=black,inner sep=0.25cm,outer sep=0pt,anchor=south west] at (7,4) {};
\node[fill=black,inner sep=0.25cm,outer sep=0pt,anchor=south west] at (7,5) {};
\node[fill=black,inner sep=0.25cm,outer sep=0pt,anchor=south west] at (7,7) {};
\node[fill=black,inner sep=0.25cm,outer sep=0pt,anchor=south west] at (7,8) {};
\node[fill=black,inner sep=0.25cm,outer sep=0pt,anchor=south west] at (7,9) {};
\node[fill=black,inner sep=0.25cm,outer sep=0pt,anchor=south west] at (11,0) {};
\node[fill=black,inner sep=0.25cm,outer sep=0pt,anchor=south west] at (11,1) {};
\node[fill=black,inner sep=0.25cm,outer sep=0pt,anchor=south west] at (11,3) {};
\node[fill=black,inner sep=0.25cm,outer sep=0pt,anchor=south west] at (11,4) {};
\node[fill=black,inner sep=0.25cm,outer sep=0pt,anchor=south west] at (11,5) {};
\node[fill=black,inner sep=0.25cm,outer sep=0pt,anchor=south west] at (11,7) {};
\node[fill=black,inner sep=0.25cm,outer sep=0pt,anchor=south west] at (11,8) {};
\node[fill=black,inner sep=0.25cm,outer sep=0pt,anchor=south west] at (11,9) {};

  \draw[step=1,help lines] (0,0) grid (12,10);

\node[draw=red,ultra thick,inner sep=0.25cm,outer sep=0pt,anchor=south west] at (14,8) {};
\node[draw=red,fill=black,ultra thick,inner sep=0.25cm,outer sep=0pt,anchor=south west] at (15,8) {};
\node[draw=red,fill=black,ultra thick,inner sep=0.25cm,outer sep=0pt,anchor=south west] at (18,8) {};
\node[draw=red,ultra thick,inner sep=0.25cm,outer sep=0pt,anchor=south west] at (15,5) {};
\node[draw=red,ultra thick,inner sep=0.25cm,outer sep=0pt,anchor=south west] at (18,2) {};

  \draw[step=1,help lines] (14,2) grid (19,9);

  \node[] at (14.5,9.5) {$a$};
\node[] at (13.5,8.5) {$a$};
\node[] at (13.5,5.5) {$\overline{a}$};
\node[] at (13.5,2.5) {$\overline{a}$};
\node[] at (15.5,9.5) {$\overline{a}$};
\node[] at (18.5,9.5) {$\overline{a}$};

  \node[main node] (1) at (20,8.5)             {};
  \node[main node] (2) at (21,8.5)             {};
  \node[main node] (3) at (23,8.5)             {};
  \node[main node] (4) at (21,5.5)             {};
  \node[main node] (5) at (23,2.5)             {};

  \path[every node/.style={font=\sffamily\footnotesize}]
    (1) edge node [left]        {} (2)
    (2) edge node [left]        {} (3)
    (2) edge node [left]        {} (4)
    (3) edge node [right]       {} (5)
    ;

\end{tikzpicture}
\end{center}

\caption{De Bruijn grid and pattern to be located with corresponding connection graph.}\label{DBGEx}
\end{figure}

First, we look at the vertical projection given by $1001001$.  Where does a sequence of type $(a, -, -, \overline{a}, -, -, \overline{a})$ appear?  In position/row 4 as $(0, -, -, 1, -, -, 1)$.  This sets our value $a = 0$.  Next, the horizontal projection given by $11001$.  Where does the sequence $(0, 1, -, -, 1)$ appear?  In position/column 5.  This gives us the position of our robot on the grid, shown in Figure \ref{DBGExS}.

\begin{figure}
\begin{center}
\begin{tikzpicture}[scale= 0.5 ]

\node[fill=black,inner sep=0.25cm,outer sep=0pt,anchor=south west] at (0,2) {};
\node[fill=black,inner sep=0.25cm,outer sep=0pt,anchor=south west] at (0,6) {};
\node[fill=black,inner sep=0.25cm,outer sep=0pt,anchor=south west] at (1,2) {};
\node[fill=black,inner sep=0.25cm,outer sep=0pt,anchor=south west] at (1,6) {};
\node[fill=black,inner sep=0.25cm,outer sep=0pt,anchor=south west] at (2,2) {};
\node[fill=black,inner sep=0.25cm,outer sep=0pt,anchor=south west] at (2,6) {};
\node[fill=black,inner sep=0.25cm,outer sep=0pt,anchor=south west] at (5,2) {};
\node[fill=black,inner sep=0.25cm,outer sep=0pt,anchor=south west] at (5,6) {};
\node[fill=black,inner sep=0.25cm,outer sep=0pt,anchor=south west] at (8,2) {};
\node[fill=black,inner sep=0.25cm,outer sep=0pt,anchor=south west] at (8,6) {};
\node[fill=black,inner sep=0.25cm,outer sep=0pt,anchor=south west] at (9,2) {};
\node[fill=black,inner sep=0.25cm,outer sep=0pt,anchor=south west] at (9,6) {};
\node[fill=black,inner sep=0.25cm,outer sep=0pt,anchor=south west] at (10,2) {};
\node[fill=black,inner sep=0.25cm,outer sep=0pt,anchor=south west] at (10,6) {};

\node[fill=black,inner sep=0.25cm,outer sep=0pt,anchor=south west] at (3,0) {};
\node[fill=black,inner sep=0.25cm,outer sep=0pt,anchor=south west] at (3,1) {};
\node[fill=black,inner sep=0.25cm,outer sep=0pt,anchor=south west] at (3,3) {};
\node[fill=black,inner sep=0.25cm,outer sep=0pt,anchor=south west] at (3,4) {};
\node[fill=black,inner sep=0.25cm,outer sep=0pt,anchor=south west] at (3,5) {};
\node[fill=black,inner sep=0.25cm,outer sep=0pt,anchor=south west] at (3,7) {};
\node[fill=black,inner sep=0.25cm,outer sep=0pt,anchor=south west] at (3,8) {};
\node[fill=black,inner sep=0.25cm,outer sep=0pt,anchor=south west] at (3,9) {};
\node[fill=black,inner sep=0.25cm,outer sep=0pt,anchor=south west] at (4,0) {};
\node[fill=black,inner sep=0.25cm,outer sep=0pt,anchor=south west] at (4,1) {};
\node[fill=black,inner sep=0.25cm,outer sep=0pt,anchor=south west] at (4,3) {};
\node[fill=black,inner sep=0.25cm,outer sep=0pt,anchor=south west] at (4,4) {};
\node[fill=black,inner sep=0.25cm,outer sep=0pt,anchor=south west] at (4,5) {};
\node[fill=black,inner sep=0.25cm,outer sep=0pt,anchor=south west] at (4,7) {};
\node[fill=black,inner sep=0.25cm,outer sep=0pt,anchor=south west] at (4,8) {};
\node[fill=black,inner sep=0.25cm,outer sep=0pt,anchor=south west] at (4,9) {};
\node[fill=black,inner sep=0.25cm,outer sep=0pt,anchor=south west] at (6,0) {};
\node[fill=black,inner sep=0.25cm,outer sep=0pt,anchor=south west] at (6,1) {};
\node[fill=black,inner sep=0.25cm,outer sep=0pt,anchor=south west] at (6,3) {};
\node[fill=black,inner sep=0.25cm,outer sep=0pt,anchor=south west] at (6,4) {};
\node[fill=black,inner sep=0.25cm,outer sep=0pt,anchor=south west] at (6,5) {};
\node[fill=black,inner sep=0.25cm,outer sep=0pt,anchor=south west] at (6,7) {};
\node[fill=black,inner sep=0.25cm,outer sep=0pt,anchor=south west] at (6,8) {};
\node[fill=black,inner sep=0.25cm,outer sep=0pt,anchor=south west] at (6,9) {};
\node[fill=black,inner sep=0.25cm,outer sep=0pt,anchor=south west] at (7,0) {};
\node[fill=black,inner sep=0.25cm,outer sep=0pt,anchor=south west] at (7,1) {};
\node[fill=black,inner sep=0.25cm,outer sep=0pt,anchor=south west] at (7,3) {};
\node[fill=black,inner sep=0.25cm,outer sep=0pt,anchor=south west] at (7,4) {};
\node[fill=black,inner sep=0.25cm,outer sep=0pt,anchor=south west] at (7,5) {};
\node[fill=black,inner sep=0.25cm,outer sep=0pt,anchor=south west] at (7,7) {};
\node[fill=black,inner sep=0.25cm,outer sep=0pt,anchor=south west] at (7,8) {};
\node[fill=black,inner sep=0.25cm,outer sep=0pt,anchor=south west] at (7,9) {};
\node[fill=black,inner sep=0.25cm,outer sep=0pt,anchor=south west] at (11,0) {};
\node[fill=black,inner sep=0.25cm,outer sep=0pt,anchor=south west] at (11,1) {};
\node[fill=black,inner sep=0.25cm,outer sep=0pt,anchor=south west] at (11,3) {};
\node[fill=black,inner sep=0.25cm,outer sep=0pt,anchor=south west] at (11,4) {};
\node[fill=black,inner sep=0.25cm,outer sep=0pt,anchor=south west] at (11,5) {};
\node[fill=black,inner sep=0.25cm,outer sep=0pt,anchor=south west] at (11,7) {};
\node[fill=black,inner sep=0.25cm,outer sep=0pt,anchor=south west] at (11,8) {};
\node[fill=black,inner sep=0.25cm,outer sep=0pt,anchor=south west] at (11,9) {};

  \draw[step=1,help lines] (0,0) grid (12,10);

  \node[draw=red,ultra thick,inner sep=0.25cm,outer sep=0pt,anchor=south west] at (4,6) {};
  \node[draw=red,ultra thick,inner sep=0.25cm,outer sep=0pt,anchor=south west] at (5,6) {};
  \node[draw=red,ultra thick,inner sep=0.25cm,outer sep=0pt,anchor=south west] at (8,6) {};
  \node[draw=red,ultra thick,inner sep=0.25cm,outer sep=0pt,anchor=south west] at (5,3) {};
  \node[draw=red,ultra thick,inner sep=0.25cm,outer sep=0pt,anchor=south west] at (8,0) {};

    \draw[step=1,ultra thick, draw=blue] (4,0) to (4,7) to (9,7) to (9,0) to (4,0);

\end{tikzpicture}
\end{center}
\caption{Solution to problem from Figure \ref{DBGEx}.}\label{DBGExS}
\end{figure}
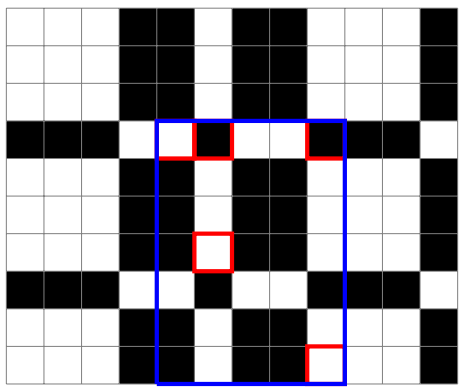

Finally, we provide a larger, non-binary example in Figure \ref{25x25}.  This figure is a de Bruijn torus for $d=5$ with $k=3$ and $n=2$ over the group $\mathcal{G} = \mathbb{Z} / 5 \mathbb{Z}$.  This torus will work to locate any of the following block patterns (amongst others) that contain four blocks.  Note that \textit{not} included is the two-by-two square, as this would give a connection graph that contains a cycle.

\begin{figure}
\centerline{\includegraphics[scale=0.6]{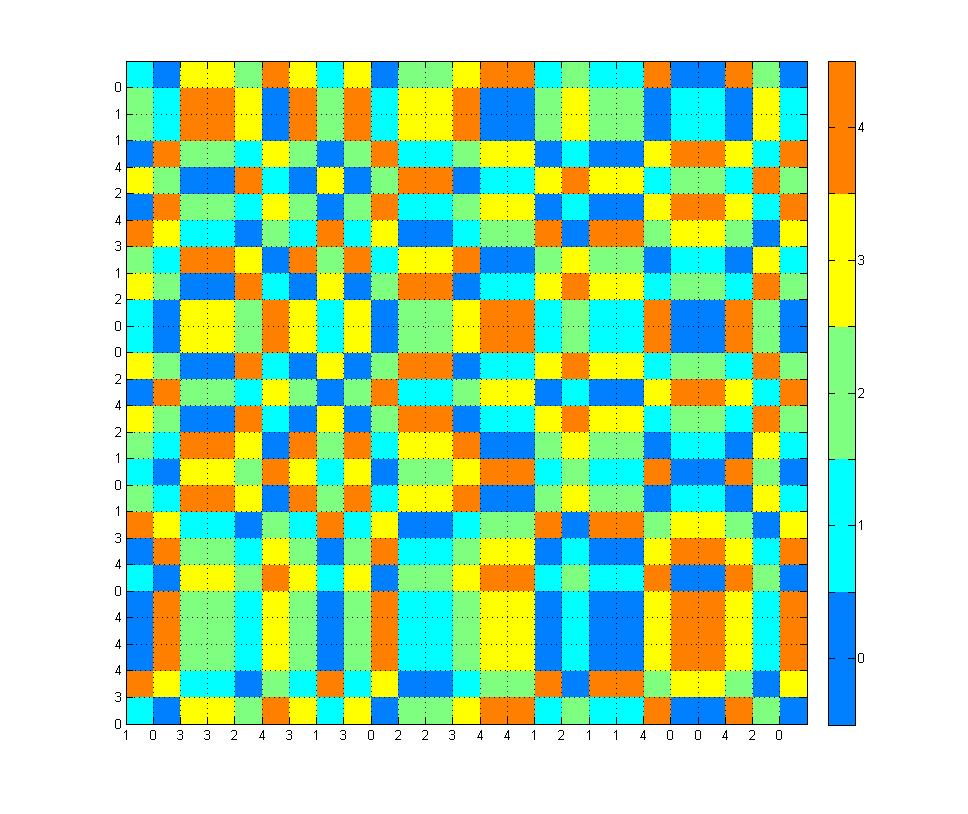}}

\begin{center}
\vspace{-10mm}
\begin{tikzpicture}[scale= 0.5 ]

  \draw[step=1,help lines] (0,3) grid (14,6);

  \draw[step=1,ultra thick, draw=red] (0,6) to (2,6) to (2,5) to (1,5) to (1,3) to (0,3) to (0,6);

  \draw[step=1,ultra thick, draw=red] (3,6) to (4,6) to (4,5) to (5,5) to (5,4) to (4,4) to (4,3) to (3,3) to (3,6);

  \draw[step=1,ultra thick, draw=red] (6,3) to (6,4) to (7,4) to (7,6) to (8,6) to (8,3) to (6,3);

  \draw[step=1,ultra thick, draw=red] (9,6) to (10,6) to (10,5) to (11,5) to (11,3) to (10,3) to (10,4) to (9,4) to (9,6);

  \draw[step=1,ultra thick, draw=red] (12,6) to (14,6) to (14,5) to (13,5) to (13,4) to (12,4) to (12,6);
  \draw[step=1,ultra thick, draw=red] (13,3) to (13,4) to (14,4) to (14,3) to (13,3);

\end{tikzpicture}
\end{center}

\caption{A de Bruijn torus for $d=5$, $k=3$, $n=2$, and $\mathcal{G}=\mathbb{Z}/5\mathbb{Z}$, accompanied by some valid block patterns.}\label{25x25}
\end{figure}

Let's consider an example on this figure.  Suppose that our block pattern is given, and we have filled it in as shown below.

\begin{center}
\begin{tikzpicture}[scale= 0.5 ]

  \draw[step=1,help lines] (3,3) grid (5,6);

  \draw[step=1,ultra thick, draw=red] (3,6) to (4,6) to (4,5) to (5,5) to (5,4) to (4,4) to (4,3) to (3,3) to (3,6);

  \node[anchor = south west] at (3,3) {0};
  \node[anchor = south west] at (3,4) {1};
  \node[anchor = south west] at (3,5) {0};
  \node[anchor = south west] at (4,4) {3};

  \node[anchor = south west] at (5,4) {or};

  \node[fill=blue,inner sep=0.25cm,outer sep=0pt,anchor=south west] at (6,3) {};
  \node[fill=blue,inner sep=0.25cm,outer sep=0pt,anchor=south west] at (6,5) {};
  \node[fill=cyan,inner sep=0.25cm,outer sep=0pt,anchor=south west] at (6,4) {};
  \node[fill=yellow,inner sep=0.25cm,outer sep=0pt,anchor=south west] at (7,4) {};

   \draw[step=1,help lines] (6,3) grid (8,6);
\end{tikzpicture}
\end{center}

If we assign the first column to have label $a$, then we get the horizontal projection is $(a, a+2)$ and the vertical projection is $(-a, -a+1, -a)$.  We begin with the vertical projection.  This could be any of the following (one for each choice of $a$):  $010, 121, 232, 343, 404$.  Note that only $404$ appears, and it appears in rows 5-7 from the bottom.  Thus we have $a=1$.  Next we find the horizontal projection $13$, which appears in columns 8-9 from the left.  Hence we find our entry in the submatrix shown below.

\begin{center}
\begin{tikzpicture}[scale= 0.5 ]

  \draw[step=1,help lines] (3,3) grid (5,6);

  \draw[step=1,ultra thick, draw=red] (3,6) to (4,6) to (4,5) to (5,5) to (5,4) to (4,4) to (4,3) to (3,3) to (3,6);

  \node[anchor = south west] at (3,3) {0};
  \node[anchor = south west] at (3,4) {1};
  \node[anchor = south west] at (3,5) {0};
  \node[anchor = south west] at (4,4) {3};
  \node[anchor = south west] at (4,5) {2};
  \node[anchor = south west] at (4,3) {2};

  \node[anchor = south west] at (5,4) {or};

  \node[fill=blue,inner sep=0.25cm,outer sep=0pt,anchor=south west] at (6,3) {};
  \node[fill=blue,inner sep=0.25cm,outer sep=0pt,anchor=south west] at (6,5) {};
  \node[fill=cyan,inner sep=0.25cm,outer sep=0pt,anchor=south west] at (6,4) {};
  \node[fill=yellow,inner sep=0.25cm,outer sep=0pt,anchor=south west] at (7,4) {};
  \node[fill=green,inner sep=0.25cm,outer sep=0pt,anchor=south west] at (7,5) {};
  \node[fill=green,inner sep=0.25cm,outer sep=0pt,anchor=south west] at (7,3) {};

   \draw[step=1,help lines] (6,3) grid (8,6);
\end{tikzpicture}
\end{center}

This decoding algorithm is summarized below.  Note that this algorithm provides us with an $\mathcal{O}(|\mathcal{Q}|+|\mathcal{S}|)$ search method, instead of the the standard $\mathcal{O}(|\mathcal{Q}|\times |\mathcal{S}|)$, as is needed for arbitrary grids as well as pseudorandom arrays.

\begin{algorithm}
\caption{Decoding Algorithm}
\begin{algorithmic}[1]
\Procedure{LocateBlocks}{$\mathcal{B}$}\Comment{Input:  Pattern to locate}
\State Determine vertical projection set
\State Search quotient string for vertical projection
\State Determine horizontal projection
\State Search de Bruijn sequence for horizontal projection
\EndProcedure

\Return Vertical location, horizontal location \Comment{row, column position}
\end{algorithmic}
\end{algorithm}

Note that Theorem \ref{tree} requires that our connection graph be a tree.  If it is not a tree, then it is either disconnected, or it contains a cycle.  We investigate each of these situations independently.

\begin{thm}
    Let $\mathcal{S}$ be a de Bruijn sequence for $d$-ary strings of length $n$ with a given comb pattern $P_1$.  Let $\mathcal{Q}$ be a quotient string with comb pattern $P_2$ for $d$-ary strings of length $k$.  Suppose that $\mathcal{G}$ is a group of order $d$.  Place $n+k-1$ blocks on the grid in a block pattern $\mathcal{B}$ so the connection graph for $\mathcal{B}$ is a forest made up of $c$ components, and so that the horizontal projection of $\mathcal{B}$ corresponds to $P_1$ and the vertical projection of $\mathcal{B}$ corresponds to $P_2$.  Then the grid $\mathcal{T}$ produced by $\mathcal{Q} \times \mathcal{S}$ using the group $\mathcal{G}$ contains every possible $n+k-1$ $d$-ary combination in block pattern $\mathcal{B}$ exactly $d^{c-1}$ times.
\end{thm}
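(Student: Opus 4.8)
The plan is to run the proof of Theorem~\ref{tree} essentially verbatim \emph{inside each component}, and then account for the extra freedom: instead of a single free label $a$ for the root's row, we now have one free label $a_m$ per tree-component of the forest, so a whole vector $\vec a=(a_1,\dots,a_c)\in\mathcal G^c$, and the task is to see how the resulting configurations distribute among the equivalence classes that the quotient string can read off.

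First I would record the structural fact that the components of the connection graph partition the used rows and the used columns: the blocks lying in a common row form a path in the connection graph (consecutive blocks in a row are nearest neighbours), so every used row, and likewise every used column, belongs to exactly one component. Writing $R_m,C_m$ for the rows and columns of component $m$, we get $k_m=|R_m|\ge1$, $n_m=|C_m|\ge1$, $\sum_m k_m=k$, $\sum_m n_m=n$. Since the connection graph of component $m$ is a tree, the BFS-propagation argument of Theorem~\ref{tree} applies internally: choosing the row-label $a_m$ of a root determines every label of that component, giving (for $\mathcal G$ abelian, as in all the examples) $r_\alpha=a_{m(\alpha)}+e_\alpha$ for each used row $\alpha$ and $c_\beta=-a_{m(\beta)}+f_\beta$ for each used column $\beta$, where the constants $e_\alpha,f_\beta$ depend only on the chosen values $x_1\dots x_{n+k-1}$ and on the tree structure.

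The heart of the argument is the counting step. The assignment $\vec a\mapsto(r_1,\dots,r_k)$ has injective linear part (every component index occurs among the $m(\alpha)$ because $k_m\ge1$), so it yields $d^c$ distinct vertical projections; two of them, $r(\vec a)$ and $r(\vec a')$, lie in the same quotient equivalence class exactly when $\vec a-\vec a'$ is a constant vector, so these projections meet precisely $d^{c-1}$ equivalence classes, and each class met is met by all $d$ of its representatives (the images of one coset of the diagonal subgroup). The quotient string $\mathcal Q$ contains exactly one representative, at one position, of each of those $d^{c-1}$ classes, which pins down exactly $d^{c-1}$ admissible vectors $\vec a$ together with their vertical locations. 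For each admissible $\vec a$ the horizontal projection $(c_1,\dots,c_n)$ is then a fixed $d$-ary $n$-string, and since $\mathcal S$ is a genuine de Bruijn sequence for the comb $P_1$ it occurs exactly once; so the filled pattern $\mathcal B$ occurs exactly $d^{c-1}$ times. I would close with the sanity check that the numbers match globally: the bipartite row/column incidence graph of $\mathcal B$ has $n+k$ vertices, $n+k-1$ edges and $c$ components, hence cycle rank $c-1$, so the $(n+k-1)$-combinations that are actually realizable in $\mathcal B$ number $d^{(n+k-1)-(c-1)}=d^{n+k-c}$, and $d^{n+k-c}\cdot d^{c-1}=d^{n+k-1}$ equals both the grid size and the number of placements of $\mathcal B$ on the torus.

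The main obstacle I anticipate is making the ``exactly $d^{c-1}$ classes, each met fully'' claim airtight, i.e.\ pinning down the interaction between the $c$ free parameters and the quotient relation; the within-component propagation and the final appeals to the defining properties of $\mathcal S$ and $\mathcal Q$ are a routine reprise of Theorem~\ref{tree}. A secondary point that must be stated carefully is the meaning of ``every possible combination'': once $c\ge2$ the $c-1$ cycles of the incidence graph impose genuine linear constraints on the $(n+k-1)$-tuples, so the assertion should be read as: every \emph{realizable} combination occurs exactly $d^{c-1}$ times (and the non-realizable ones not at all), which is exactly what the counting check above confirms.
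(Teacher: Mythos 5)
Your main argument is the same as the paper's: label each tree component by a free parameter $a_i$, observe that the $d^c$ candidate vertical projections fall into $d^{c-1}$ equivalence classes with each class consisting entirely of candidates, use the quotient property of $\mathcal{Q}$ to select exactly one admissible $\vec a$ per class together with its location, and finish with the de Bruijn property of $\mathcal{S}$. Your version of the key counting step (the coset-of-the-diagonal argument) is in fact tighter than the paper's, which merely asserts that exactly $d^{c-1}$ of the $d^c$ candidate strings can be located.

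The problem lies in your closing ``sanity check'' and the reinterpretation it leads you to. Your own structural lemma --- that the components partition the used rows and columns into $(R_m,C_m)$ with $k_m=|R_m|$, $n_m=|C_m|$ --- combined with the tree hypothesis forces each component to contain exactly $k_m+n_m-1$ blocks: a row with $j$ blocks contributes $j-1$ connection-graph edges, likewise for columns, so component $m$ has $b_m$ vertices and $2b_m-k_m-n_m$ edges, and being a tree gives $b_m=k_m+n_m-1$. Summing over components, the pattern has $n+k-c$ blocks, so the stated hypothesis of $n+k-1$ blocks is satisfiable only when $c=1$; for $c\ge 2$ the theorem as written is vacuous. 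In particular the bipartite incidence graph can never have cycle rank $c-1>0$ while the connection graph is a forest (an incidence cycle lifts to a backtrack-free closed trail in the connection graph, hence to a cycle there), so the ``non-realizable combinations'' you introduce in your last paragraph do not exist under the hypotheses; that part of your write-up is internally inconsistent rather than a repair of the statement. The actual repair is to read the block count as $n+k-c$, the number forced by the remaining hypotheses (the paper's own proof, whose schematic picture shows $c$ blocks of sizes $k_m\times n_m$, clearly intends this). With that change ``every possible combination appears exactly $d^{c-1}$ times'' is correct as literally stated, your main argument (and the paper's) proves it, and the global count becomes $d^{\,n+k-c}\cdot d^{\,c-1}=d^{\,n+k-1}$, matching the grid size with no realizability caveat needed.
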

\begin{proof}
    We first note that each component will have its own vertical (and corresponding horizontal) projection. Label the components as $C_1, C_2, \ldots , C_c$.  Using our column/row labeling procedure outlined in the proof of Theorem \ref{tree}, we will find column and row labels for each component in terms of a single variable (associated with whichever row/column started our algorithm).  Suppose that the labeling for $C_i$ is given in terms of $a_i$, and the vertical projection in terms of $a_i$ is given by $V_i(a_i)$ while the horizontal projection is given as $H_i(a_i)$.  That is, we get a projection labeling similar to the example shown below.  Note that this example is simplified so that the components are grouped clearly.  It could instead be the case that your projections are not consecutive rows/columns, but mixed.

    \begin{center}
    \begin{tikzpicture}[scale= 0.5 ]

    \draw (8,0) rectangle (10,2);
    \draw (5,3) rectangle (7,5);
    \draw (3,5) rectangle (5,7);

    \node[] at (4,6) {1};
    \node[] at (6,4) {2};
    \node[] at (9,1) {$c$};
    \node[anchor = south west] at (7,2) {$\ddots$};

    \draw[ultra thick] (3,0) rectangle (10,7);

    \draw (1,0) rectangle (2,2);
    \node[anchor= west] at (1,1) {$V_c$};
    \node[anchor = south west] at (1,2) {$\vdots$};
    \draw (1,3) rectangle (2,5);
    \node[anchor= west] at (1,4) {$V_2$};
    \draw (1,5) rectangle (2,7);
    \node[anchor= west] at (1,6) {$V_1$};

    \draw (3,8) rectangle (5,9);
    \node[anchor=south] at (4,8) {$H_1$};
    \draw (5,8) rectangle (7,9);
    \node[anchor=south] at (6,8) {$H_2$};
    \node[anchor=south west] at (7,8) {$\cdots$};
    \draw (8,8) rectangle (10,9);
    \node[anchor=south] at (9,8) {$H_c$};
    \end{tikzpicture}
    \end{center}

    As stated previously, each of these projections is dependent on the input variable $a_i \in [d]$.  We must find a $k$-tuple in the quotient de Bruijn string such that each $V_i(a_i)$ has an equivalence class representative in the correct location.  Considering all possible combinations of all possible equivalence class representatives, there are $d^c$ different possible strings that we must look for in our quotient string $\mathcal{Q}$.  However, since only one from each equivalence class appears in $\mathcal{Q}$, we will able to locate exactly $d^{c-1}$ of them in $\mathcal{Q}$, and these will be total vertical projections for the connection graph.  Each one of these total vertical projections appearing in $\mathcal{Q}$ will produce a corresponding total horizontal projection.  Hence the pattern we are attempting to locate will in fact appear $d^{c-1}$ times in our grid.
\end{proof}

\begin{thm}
        Let $\mathcal{S}$ be a de Bruijn sequence for $d$-ary strings of length $n$ with a given comb pattern $P_1$.  Let $\mathcal{Q}$ be a quotient string with comb pattern $P_2$ for $d$-ary strings of length $k$.  Suppose that $\mathcal{G}$ is a group of order $d$.  Place $n+k-1$ blocks on the grid in a block pattern $\mathcal{B}$ so the connection graph for $\mathcal{B}$ contains a cycle, and so that the horizontal projection of $\mathcal{B}$ corresponds to $P_1$ and the vertical projection of $\mathcal{B}$ corresponds to $P_2$.  Then there exists an $n+k-1$ $d$-ary combination in block pattern $\mathcal{B}$ that cannot be found within the grid $\mathcal{T}$ produced by $\mathcal{Q} \times \mathcal{S}$ using group $\mathcal{G}$.
\end{thm}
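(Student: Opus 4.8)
The plan is to show that when the connection graph contains a cycle, the row/column labels around that cycle are over-determined, which forces an algebraic consistency condition on the filled bits; choosing a filling that violates this condition produces a pattern that cannot occur. First I would set up the labeling exactly as in the proof of Theorem \ref{tree}: suppose the block pattern appears somewhere in $\mathcal{T}$, and let $B$ be a block in position (row $r$, column $c$) holding bit $x$; then the defining relation of the $\mathcal{Q}\times\mathcal{S}$ torus gives $x = \mathcal{S}_c + \mathcal{Q}_r$ (up to the sign/offset conventions of the construction), so for two blocks sharing a column the difference of their bits equals the difference of the corresponding $\mathcal{Q}$-entries, and for two blocks sharing a row it equals the difference of the corresponding $\mathcal{S}$-entries. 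Equivalently, orienting each edge of the connection graph and assigning it the bit-difference of its endpoints, a horizontal edge records a difference of column labels and a vertical edge a difference of row labels.

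Next I would focus on a single cycle $B_{i_1}, B_{i_2}, \ldots, B_{i_\ell}, B_{i_1}$ in the connection graph. Walking around the cycle and summing the (signed) bit-differences along its edges must yield $0$, because each row label and each column label is entered and left the same number of times, so all label contributions telescope to zero. This gives a nontrivial linear relation over $\mathcal{G}$ among the bits $x_{i_1}, \ldots, x_{i_\ell}$ — concretely, the alternating-type sum of the bits around the cycle is forced to be $0$ (the precise coefficients are $\pm 1$ determined by whether consecutive cycle edges are both horizontal, both vertical, or mixed, but in all cases the relation is not identically satisfied). Therefore any filling of $\mathcal{B}$ in which the bits on the cycle violate this relation — and such a filling exists, since we may set all cycle-bits to $0$ except one, which we set to a nonzero element of $\mathcal{G}$ (possible as $d = |\mathcal{G}| \geq 2$), then complete the remaining $n+k-1-\ell$ bits arbitrarily — cannot appear anywhere in $\mathcal{T}$. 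This is exactly the claimed $(n+k-1)$-combination in block pattern $\mathcal{B}$ that is not found in the grid.

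The main obstacle I expect is bookkeeping the signs: when the connection graph has gaps (the projections are combs rather than solid blocks), a ``horizontal edge'' in the connection graph still joins two blocks in the same row but possibly non-adjacent columns, and I must make sure the telescoping argument only uses that they lie in a common row or common column, not adjacency — so I would phrase everything in terms of the row and column label maps rather than in terms of the comb positions, which sidesteps the issue. A secondary subtlety is that a block may participate in several cycles and that the graph may have other components; but since I only need existence of one bad filling, it suffices to exhibit one cycle (take any fundamental cycle of the connection graph), impose the single telescoping relation it yields, and fill everything else freely. I would close by noting the contrast with Theorem \ref{tree}: a tree has no cycles, hence no such forced relation, which is precisely why the tree hypothesis there is the right one.
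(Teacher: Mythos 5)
Your proposal is correct in substance and starts from the same observation as the paper---a cycle in the connection graph over-determines the row and column labels---but it finishes by a genuinely different mechanism. The paper runs the labelling procedure of Theorem \ref{tree} until it reaches the block $B_i$ that closes the cycle, notes that $B_i$ then has both a row label $r(a)$ and a column label $c(a)$ forced by its two predecessors so that its entry $b$ must equal $r(a)+c(a)$, and then invokes the fact that $\mathcal{Q}$ contains exactly one representative per equivalence class to conclude that exactly one of the $d$ choices of $b$ is locatable and the other $d-1$ are not. You instead extract a position-independent linear relation satisfied by the entries around the cycle and exhibit a filling that violates it. This buys something real: your bad filling occurs in \emph{no} grid of the form $\mathcal{T}_{ij}=\mathcal{S}_j+\mathcal{Q}_i$ whatsoever, with no appeal to the de Bruijn or quotient-string properties, and it sidesteps the paper's convention-dependent claim that $b=r(a)+c(a)$ ``determines $a$'' (with the grid's actual sign conventions the $a$-dependence cancels and the condition is a pure constraint on the bits---which is precisely your relation). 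What the paper's route buys in exchange is the sharper count that exactly one of the $d$ fillings of the closing block actually occurs, rather than bare existence of a missing one.

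Two steps need tightening. First, the coefficients in your relation are not always $\pm1$: a block on the cycle can receive coefficient $0$ (for instance the middle of three consecutive cycle-blocks in one row, when its column meets no other cycle-block), so ``set all cycle-bits to $0$ except one'' must pick a block with nonzero coefficient. Second, the telescoping as literally stated (summing the bit-differences around the cycle gives $0$) is a tautology and does not by itself yield a relation on the bits alone; the actual content is that a cycle in the connection graph forces a cycle in the bipartite row/column incidence graph whose edges are the cycle-blocks, and the $\{0,\pm1\}$-vector of that graph cycle supplies the nontrivial relation. Both are routine repairs, and your stated fallback of phrasing everything through the row and column label maps rather than comb positions is the right instinct.
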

\begin{proof}
    Suppose that our connection graph contains a cycle.  Then there exists a block $B_i$ in the connection graph that has two neighbors preceding it in the ordering that was determined.  Call these neighbors $B_s$ and $B_t$.  When we determine the row and column labels for our blocks, depending on some variable $a$, the blocks $B_s$ and $B_t$ provide a row label $r(a)$ and a column label $c(a)$ for $B_i$.  When we are given a filled-in block pattern to locate in the grid, we will only be able to find block patterns in which the entry for $B_i$ is equal to $r(a)+c(a)$.  That is, if $B_i = b$, we must have $b = r(a)+c(a)$.  Note that this will completely determine our value for $a$, and so our vertical projection will be specific and cannot be replaced with a different representative from the same equivalence class. As exactly one representative from each class appears in $\mathcal{Q}$, this implies that for exactly one choice of $b \in [d]$ we can find the filled-in block pattern, but for the $d-1$ other choices we cannot.
\end{proof}

\section{Variations on the De Bruijn Torus Problem}\label{ucycles}

We now consider using other combinatorial objects rather than $d$-ary strings.  This requires us to jump from the world of de Bruijn sequences to the land of universal cycles.  To start, we generalize our definition of quotient strings to consider objects other than $d$-ary strings.

\begin{defn}
    Let $\mathcal{C}$ be a set of combinatorial objects, and suppose that we have an equivalence relation defined over the set that provides a partition of $\mathcal{C}$ into parts $P_1, P_2, \ldots, P_t$.  A \textbf{quotient string} for $\mathcal{C}$ is a string that contains exactly one consecutive substring from each part exactly once.  It is essentially a universal cycle for a set of equivalence class representatives.
\end{defn}

We now consider $k$-permutations.  For this we introduce a few new definitions.

\begin{defn}
    A \textbf{difference pattern} for a $k$-permutation of $[n]$ given by $x_1x_2 \ldots x_k$ is the string $d_1d_2 \ldots d_{k-1}$ where $d_i = x_{i+1}-x_i$.  A \textbf{difference sequence} for $k$-permutations of $[n]$ is a universal string over all possible difference patterns for $k$-permutations of $[n]$.  
\end{defn}

The difference sequence for $k$-permutation is the quotient string that we will use to produce grids.  A necessary requirement for this is a proof that such difference sequences exist.

\begin{lem}
    There exists a difference sequence for $k$-permutations of $[n]$ for all $n,k \in \mathbb{Z}^+$ with $3 \leq k < n$.
\end{lem}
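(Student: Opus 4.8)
The plan is to build a difference sequence for $k$-permutations of $[n]$ by realizing the set of all difference patterns as the node set (or the edge set) of a suitable digraph and then invoking an Eulerian/Hamiltonian argument. First I would identify exactly which strings $d_1d_2\ldots d_{k-1}$ arise as difference patterns of some $k$-permutation of $[n]$: each $d_i$ is a nonzero integer in $\{-(n-1),\ldots,-1,1,\ldots,n-1\}$, and a string is realizable iff the partial sums $0, d_1, d_1+d_2, \ldots, d_1+\cdots+d_{k-1}$ are all distinct and their range (max minus min) is at most $n-1$; equivalently, after shifting, they fit into $[n]$. The goal is a cyclic sequence over the alphabet of difference values in which every such realizable $(k-1)$-string occurs exactly once as a window.

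The natural approach is the transition-graph construction used for de Bruijn and universal cycles: form a digraph $G$ whose vertices are the realizable $(k-2)$-strings (the "prefixes/suffixes" of difference patterns) and whose arcs are the realizable $(k-1)$-strings, with an arc from $d_1\ldots d_{k-2}$ to $d_2\ldots d_{k-1}$ labeled by $d_1\ldots d_{k-1}$. A difference sequence is then exactly an Eulerian circuit in $G$. So the two things I must verify are (i) $G$ is connected (on the vertices of nonzero degree), and (ii) $G$ is balanced, i.e. in-degree equals out-degree at every vertex. For (ii), the in-degree of a vertex $w = d_1\ldots d_{k-2}$ counts the realizable left-extensions $d_0 d_1\ldots d_{k-2}$ and the out-degree counts the realizable right-extensions $d_1\ldots d_{k-2}d_{k-1}$; I would set up a bijection between these two sets — reflecting the permutation through its center (the map $x \mapsto (n+1)-x$ reverses all differences) should exchange left- and right-extensions and give the balance. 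For (i), I would show any realizable difference pattern can be transformed into any other by a sequence of single-symbol window shifts staying inside the realizable set; this reduces to a connectivity statement about the "permutation graph" on $k$-subsequences of $[n]$, which for $k < n$ has enough slack to reroute.

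The hardest step, I expect, is the connectivity argument (i): proving that the realizable region of difference-pattern space is not fragmented into pieces the window-shift operation cannot bridge. The degenerate constraint $k<n$ matters here — when $k=n$ every $k$-permutation uses all of $[n]$ and the difference patterns are far more rigid, whereas for $k<n$ one always has a free symbol to slide permutations around. I would handle this by an explicit argument: show that from any realizable $k$-permutation one can reach a canonical one (say the identity $1\,2\,\cdots\,k$, with difference pattern $1^{k-1}$) by successively shifting the window one place and replacing one endpoint by an unused value in $[n]$, checking that realizability is preserved at each step; connectivity of $G$ follows since every arc touches a vertex reachable from the canonical vertex. The balance condition (ii) and the bookkeeping that an Eulerian circuit yields a cyclic string of the right length ($d^{k-1}$-type count, here the number of realizable $(k-1)$-patterns) in which every pattern appears once are then routine, and the bound $3 \le k$ just ensures the patterns have length $\ge 2$ so the transition graph is nontrivial.
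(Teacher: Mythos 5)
Your proposal follows the same skeleton as the paper---both form the transition digraph whose vertices are the $(k-2)$-prefixes of realizable difference patterns and whose arcs are the realizable $(k-1)$-patterns, and both extract the difference sequence as an Euler tour---but you diverge at the key step. The paper never verifies balance and connectivity of this digraph directly. Instead it introduces a second digraph $\mathcal{G}_p$, the transition digraph of $k$-permutations of $[n]$, defines the map $\varphi$ sending each permutation prefix to the prefix of its difference pattern, checks that $\varphi$ is a contraction carrying edges onto edges, and then imports Jackson's theorem that $\mathcal{G}_p$ is eulerian; balance and connectivity of the difference digraph are inherited through $\varphi$. Your route proves those two properties from scratch, which is viable but forces you to re-derive the connectivity of the permutation transition graph (the substance of Jackson's result) rather than citing it; you correctly flag this as the hardest part but leave it as a sketch, and it is precisely the work the paper's quotient argument sidesteps.

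One concrete repair is needed in your balance step: the reflection $x \mapsto (n+1)-x$ negates every difference, so composed with reversal of the string it carries the out-edges of a vertex $w = d_1\ldots d_{k-2}$ to the in-edges of the \emph{reversed} vertex $d_{k-2}\ldots d_1$; this proves $\mathrm{outdeg}(w) = \mathrm{indeg}(w^{R})$, not $\mathrm{outdeg}(w)=\mathrm{indeg}(w)$, so it does not establish balance at a fixed vertex. Balance does hold and can be shown by a direct count: if $s_0=0,s_1,\ldots,s_{k-2}$ are the partial sums of $w$, with maximum $M$ and minimum $m$, then a right-extension adds a new partial sum and a left-extension adds a new value one step before the origin, and in both cases the admissible new value ranges over exactly the integers of $\left[M-(n-1),\, m+(n-1)\right]$ not already among the $s_i$; hence in-degree and out-degree both equal $2n-1-(M-m)-(k-1)$. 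With that substitution, and with the window-shifting connectivity argument actually carried out (checking realizability at each intermediate window, where the hypothesis $k<n$ supplies the needed slack), your plan would give a correct, self-contained alternative to the paper's reduction to Jackson's theorem.
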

\begin{proof}
    First, we will construct the transition digraph, $\mathcal{G}_d$, for difference patterns for $k$-permutations of $[n]$, with the following vertex and edge sets.
    
    \textit{Vertices:}  $d_1d_2 \ldots d_{k-2}$ where $d_1d_2 \ldots d_{k-2}d_{k-1}$ is a difference pattern for a $k$-permutation of $[n]$ for some $d_{k-1}$.

    \textit{Edges:}  $d_1d_2d_3 \ldots d_{k-2} \rightarrow d_2d_3 \ldots d_{k-2}d_{k-1}$, where $d_1d_2d_3 \ldots d_{k-2}d_{k-1}$ is a valid difference pattern for a $k$-permutation of $[n]$.
    
    As is standard practice in the universal cycle literature, we will show that this graph is eulerian by illustrating that it is both balanced and weakly connected.  Once this graph is known to be eulerian, we can find a difference sequence for $k$-permutations of $[n]$ simply by following any Euler tour in $\mathcal{G}_d$.  To prove that $\mathcal{G}_d$ is eulerian, we construct a second (separate but related) digraph, $\mathcal{G}_p$.  This graph $\mathcal{G}_p$ is the transition digraph for $k$-permutations of $[n]$ is the graph with the following vertex and edge sets.

    \textit{Vertices:}  $x_1x_2 \ldots x_{k-1}$ where $x_1x_2 \ldots x_{k-1}x_k$ is a $k$-permutation of $[n]$ for some $x_k$.

    \textit{Edges:}  $x_1x_2x_3 \ldots x_{k-1} \rightarrow x_2 x_3 \ldots x_{k-1}x_k$ where $x_1 x_2 x_3 \ldots x_{k-1}x_k$ is a $k$-permutation of $[n]$.
    
    Now that we have both digraphs defined, we show a special relationship between $\mathcal{G}_d$ and $\mathcal{G}_p$ by looking at the mapping $\varphi : \mathcal{G}_p \mapsto \mathcal{G}_d$, which maps each $k$-permutation's prefix to the prefix of its difference pattern.  This mapping is equivalent to contracting each difference pattern's equivalence class in $\mathcal{G}_p$ to produce $\mathcal{G}_d$.  To prove that this statement is true, we need only show that there is an edge from $X^- = x_1x_2 \ldots x_{k-1}$ to $X^+ = x_2x_3 \ldots x_{k-1}x_k$ if and only if there is an edge from $D_{X^-}$ to $D_{X^+}$, the corresponding difference sequences.

    First, we note that $D_{X^-} = d_1d_2 \ldots d_{k-2}$ where $d_i= x_{i+1}-x_i$ for $i \in [k-2]$, and $D_{X^+}=\delta_2 \delta_3 \ldots \delta_{k-1}$ where $\delta_i = x_{i+1}-x_i$ for $i \in \{2,3, \ldots , k-1\}$.  Hence $d_2d_3 \ldots d_{k-2} = \delta_2 \delta_3 \ldots \delta_{k-2}$, and so we have $D_{X^-} \rightarrow D_{X^+}$ in $\mathcal{G}_d$.

    For the reverse, suppose that we have an edge $d_1d_2 \ldots d_{k-2} \rightarrow d_2d_3 \ldots d_{k-1}$ in $\mathcal{G}_d$.  Then the difference pattern $D=d_1d_2 \ldots d_{k-1}$ corresponds to a class of $k$-permutations of $[n]$ of the form $\left(x, x+d_1, x+d_1+d_2, \ldots , x+ \sum_{i=1}^{k-1} d_i \right)$ (for any $x \in [n]$).  These $k$-permutations correspond to the following edges in $\mathcal{G}_p$. $$\left(x, x+d_1 , x+d_1+d_2, \ldots , x+ \sum_{i=1}^{k-2} d_i \right)$$ $$ \rightarrow \left(x+d_1, x+d_1+d_2, \ldots ,x+ \sum_{i=1}^{k-2}d_i, x+\sum_{i=1}^{k-1} \right)$$  Thus our mapping $\varphi$ performs as stated.  In other words, $\varphi$ maps classes of $n$ vertices in $\mathcal{G}_p$ to one representative in $\mathcal{G}_d$, and maps classes of $n$ edges in $\mathcal{G}_p$ to one edge in $\mathcal{G}_d$.
    
    Next, from \cite{kPerms}, we know that $\mathcal{G}_p$ is eulerian.  Now we can use our mapping $\varphi$ to prove that $\mathcal{G}_d$ is eulerian too, as the $k$-permutation digraph is connected if and only if $\mathcal{G}_d$ is connected.  Lastly, because of our mapping $\varphi$, it is clear that the $\mathcal{G}_d$ is balanced if and only if $\mathcal{G}_p$ is balanced.  Hence the $\mathcal{G}_d$ is eulerian.
\end{proof}

Now that we know the correct quotient string exists, we use a construction similar to that in Theorem \ref{tree} to produce our main result for this section.

\begin{thm}
    Let $\mathcal{S}$ be a universal cycle for $k$-permutations of $[n]$ and let $\mathcal{D}$ be a difference string for $\ell$-permutations of $[n]$.  Let $\mathcal{G}$ be a group of order $n$, with elements labelled by $[n]$ and operation $\oplus$.  Construct the torus for $\mathcal{D} \times \mathcal{S}$ over $\mathcal{G}$ and call it $\mathcal{T}$.  Fix a block pattern $\mathcal{B}$ such that we have $k$ blocks horizontally and $\ell$ blocks vertically.  Fill block pattern $\mathcal{B}$ with elements from $\mathcal{G}$ arbitrarily such that the $k$ horizontal blocks form a $k$-permutation and the $\ell$ vertical blocks form an $\ell$-permutation.  Then this filled block pattern must appear exactly once in $\mathcal{T}$.
\end{thm}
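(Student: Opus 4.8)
The plan is to run the argument of Theorem~\ref{tree} with ``$d$-ary string'' replaced by ``permutation'' and the equivalence ``$x-y=c\cdot(1,\dots,1)$'' replaced by ``same difference pattern''. Write the entry of $\mathcal{T}$ in row $i$, column $j$ as $\mathcal{S}_j\oplus r_i$, where $(r_i)$ is obtained by integrating the difference string, $r_{i+1}=r_i\oplus\mathcal{D}_i$ (the analogue, for difference patterns, of turning a de Bruijn sequence $\mathcal A$ into a quotient string via $d_{i+1}=a_i+d_i$); by the same cancellation argument used in the Lemma, each length-$\ell$ window of $(r_i)$ is an $\ell$-permutation and these realize every difference pattern of $\ell$-permutations exactly once. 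Two facts drive the proof: (a) adding a fixed element of $\mathcal{G}$ to every coordinate of a $k$-permutation is a bijection of the set of $k$-permutations, so in any fixed row the horizontal $k$-windows run through all $n!/(n-k)!$ $k$-permutations exactly once as the starting column varies; and (b) the difference pattern of a vertical $\ell$-window in a fixed column $j$ equals that of the corresponding length-$\ell$ window of $(r_i)$, so within each column exactly one representative of each difference-pattern class of $\ell$-permutations occurs as a vertical $\ell$-window. (Here ``$-$'' and ``difference pattern'' are those of $\mathcal{G}$, which we take abelian so they are unaffected by the shift $\oplus\mathcal{S}_j$; the cyclic groups used in the examples satisfy this.) Combined with the count $|\mathcal{T}|=|\mathcal{S}|\cdot|\mathcal{D}|=\bigl(n!/(n-k)!\bigr)\cdot\bigl((n-1)!/(n-\ell)!\bigr)$, which is exactly the number of legal fillings of $\mathcal{B}$ (a $k$-permutation on the horizontal arm, times an $\ell$-permutation on the vertical arm forced to agree with it at the one shared cell), it suffices to prove every legal filling occurs at least once.

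For that, let the horizontal arm carry $(h_1,\dots,h_k)$ and the vertical arm carry $(v_1,\dots,v_\ell)$, meeting at horizontal index $p$ and vertical index $q$, so $h_p=v_q$. By (b), the difference pattern of $(v_1,\dots,v_\ell)$ pins down the unique block of rows $i_1,\dots,i_\ell$ whose offsets $r_{i_1},\dots,r_{i_\ell}$ realize it; this uses only $\mathcal{D}$. The horizontal arm must then lie in row $i_q$ with offset $c=r_{i_q}$, and by (a) there is a unique column at which the $k$-window of row $i_q$ equals $(h_1,\dots,h_k)$; this fixes the columns of all horizontal cells, in particular the column $\gamma$ of the shared cell, which is where the vertical arm must sit. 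Hence the placement, if it exists, is unique. It does exist: the chosen horizontal placement forces the torus entry at the shared cell to be $h_p$, and since $h_p=v_q$ this is also the value required there by the vertical arm; the vertical $\ell$-window now sitting in column $\gamma$ and the target $(v_1,\dots,v_\ell)$ have the same difference pattern and agree in position $q$, so a telescoping argument shows they agree in every position, while the horizontal window equals $(h_1,\dots,h_k)$ by construction. Therefore the filling appears exactly once.

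The step I expect to be the genuine obstacle is precisely this consistency at the shared cell: the vertical data forces the rows, hence $c$, hence (via $\mathcal{S}$) the columns of the horizontal arm, hence the column of the vertical arm, and one must check this chain of deductions does not over-constrain the configuration — it is the equality $h_p=v_q$ built into the notion of a ``legal filling'' that makes the two determinations of the shared cell coincide. Getting this bookkeeping right is the only delicate point; if one wants the full generality of Theorem~\ref{tree} (comb-shaped arms, or a tree-shaped $\mathcal{B}$ rather than a plain cross) one carries it out block-by-block along a rooted ordering exactly as there. The counting, the bijection observations (a) and (b), and the telescoping are all routine.
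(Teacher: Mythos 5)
The paper states this theorem without any proof (it proceeds directly to the $2$-permutations-of-$[4]$ example), so there is no argument of the authors' to compare yours against; your proposal fills a genuine gap. Your proof is correct, and it is the natural adaptation of the Theorem~\ref{tree} argument that the authors evidently intended: translation by a fixed group element permutes the $k$-permutations, so each row of $\mathcal{T}$ still windows every $k$-permutation exactly once; each column windows exactly one representative of each difference-pattern class; the vertical data then determines the row block uniquely, the horizontal data determines the column uniquely, and the telescoping argument from the shared cell gives existence. The counting step confirming that the number of legal fillings equals $|\mathcal{S}|\cdot|\mathcal{D}|$ is also right. Your two interpretive choices — reading the row labels as the \emph{integrated} difference string (so that its length-$\ell$ windows are equivalence-class representatives, as in the paper's example where $\mathcal{Q}=013$), and taking ``difference'' to mean subtraction in $\mathcal{G}$ with $\mathcal{G}$ abelian (or with consistently one-sided conventions) so that the shift by $\mathcal{S}_j$ preserves difference patterns — are exactly the readings under which the theorem is true and consistent with the paper's examples; the paper itself leaves both points implicit, so it is to your credit that you flagged them.
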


We now provide an example and consider the following.  If we want to produce a torus for $2$-permutations of $[4]$, we need a universal cycle for this set (for the horizontal) and a difference string as well (for the vertical).  In this example, we will use the Klein-4 group for our group $\mathcal{G}$, with addition given by the following group table.

$$\begin{array}{c|cccc}
    \oplus & 0 & 1 & 2 & 3 \\
    \hline
    0 & 0 & 1 & 2 & 3 \\
    1 & 1 & 0 & 3 & 2 \\
    2 & 2 & 3 & 0 & 1 \\
    3 & 3 & 2 & 1 & 0
\end{array}$$

Our universal cycle is given by $013120230321$, and our quotient string is given by $013$.  Then our $2 \times 2$ torus is the following $3 \times 12$ grid.

$$\begin{array}{c|cccccccccccc}
    \oplus & 0 & 1 & 3 & 1 & 2 & 0 & 2 & 3 & 0 & 3 & 2 & 1 \\
    \hline
    0 & 0 & 1 & 3 & 1 & 2 & 0 & 2 & 3 & 0 & 3 & 2 & 1 \\
    1 & 1 & 0 & 2 & 0 & 3 & 1 & 3 & 2 & 1 & 2 & 3 & 0 \\
    3 & 3 & 2 & 0 & 2 & 1 & 3 & 1 & 0 & 3 & 0 & 1 & 2
\end{array}$$

For example, in the example for 2-permutations of $[4]$, consider our grid constructed previously and the marked block pattern.

\begin{center}
\begin{tikzpicture}[scale= 0.5 ]

\node[anchor=south west] at (0,0) {3};
\node[anchor=south west] at (1,0) {2};
\node[anchor=south west] at (2,0) {0};
\node[anchor=south west] at (3,0) {2};
\node[anchor=south west] at (4,0) {1};
\node[anchor=south west] at (5,0) {3};
\node[anchor=south west] at (6,0) {1};
\node[anchor=south west] at (7,0) {0};
\node[anchor=south west] at (8,0) {3};
\node[anchor=south west] at (9,0) {0};
\node[anchor=south west] at (10,0) {1};
\node[anchor=south west] at (11,0) {2};

\node[anchor=south west] at (0,1) {1};
\node[anchor=south west] at (1,1) {0};
\node[anchor=south west] at (2,1) {2};
\node[anchor=south west] at (3,1) {0};
\node[anchor=south west] at (4,1) {3};
\node[anchor=south west] at (5,1) {1};
\node[anchor=south west] at (6,1) {3};
\node[anchor=south west] at (7,1) {2};
\node[anchor=south west] at (8,1) {1};
\node[anchor=south west] at (9,1) {2};
\node[anchor=south west] at (10,1) {3};
\node[anchor=south west] at (11,1) {0};

\node[anchor=south west] at (0,2) {0};
\node[anchor=south west] at (1,2) {1};
\node[anchor=south west] at (2,2) {3};
\node[anchor=south west] at (3,2) {1};
\node[anchor=south west] at (4,2) {2};
\node[anchor=south west] at (5,2) {0};
\node[anchor=south west] at (6,2) {2};
\node[anchor=south west] at (7,2) {3};
\node[anchor=south west] at (8,2) {0};
\node[anchor=south west] at (9,2) {3};
\node[anchor=south west] at (10,2) {2};
\node[anchor=south west] at (11,2) {1};

  \draw[step=1,help lines] (0,0) grid (12,3);
  \draw[step=1,ultra thick] (0,3) to (2,3) to (2,1) to (1,1) to (1,2) to (0,2) to (0,3);

\end{tikzpicture}
\end{center}

\section{Future Work}\label{FutureWork}

There are many directions for future research to consider.  One of the most obvious concerns our original motivation from Problem 480 \cite{ResearchProbs}:  how do we modify our methods to allow for cycles in the connection graph?  The original task is to use a block pattern that is simply a rectangle, which potentially contains many cycles.

Corresponding to our variations that utilize universal cycles, there are many open problems simply by consider the various combinatorial objects that have been `ucycled'.  For example, these could consider subsets, partitions, weak orders, etc.  The vast literature on ucycles provides plenty of opportunities for future work.

Finally, when we consider the real-world applications we must allow for things like sensor failure.  Is there any way to build redundancy into our tori so that if utilized for robotic vision (self-detection for robots on the grid), can our methods handle the failure of one sensor (i.e. losing one block in our block pattern)?  Alternatively, can we make this method robust to rotations?  In other words, if the robot rotates $90^\circ$, is it still able to self-locate?

As we consider potential applications and variations, there is a plethora of research possibilities available.
\bibliographystyle{amsplain}

\end{document}